\theoremstyle{plain}
\newtheorem{theorem}{Theorem}[section]
\newtheorem{proposition}[theorem]{Proposition}
\newtheorem{lemma}[theorem]{Lemma}
\newtheorem{corollary}[theorem]{Corollary}
\theoremstyle{definition}
\newtheorem{definition}[theorem]{Definition}
\theoremstyle{remark}
\DeclareMathOperator{\Cor}{Cor}
\DeclareMathOperator{\Aut}{Aut}
\DeclareMathOperator{\Out}{Out}
\DeclareMathOperator{\C}{C}
\DeclareMathOperator{\PSL}{PSL}
\DeclareMathOperator{\Id}{Id}
\DeclareMathOperator{\Sz}{Sz}
\DeclareMathOperator{\OO}{\mathcal{O}}
\DeclareMathOperator{\CC}{\mathcal{C}}
\DeclareMathOperator{\PG}{\mathbf{P}\mathbf{G}}
\DeclareMathOperator{\FF}{\mathbb{F}}
\DeclareFontFamily{OMX}{MnSymbolE}{}
\DeclareSymbolFont{MnLargeSymbols}{OMX}{MnSymbolE}{m}{n}
\DeclareFontShape{OMX}{MnSymbolE}{m}{n}{
    <-6>  MnSymbolE5
   <6-7>  MnSymbolE6
   <7-8>  MnSymbolE7
   <8-9>  MnSymbolE8
   <9-10> MnSymbolE9
  <10-12> MnSymbolE10
  <12->   MnSymbolE12
}{}
\DeclareFontShape{OMX}{MnSymbolE}{b}{n}{
    <-6>  MnSymbolE-Bold5
   <6-7>  MnSymbolE-Bold6
   <7-8>  MnSymbolE-Bold7
   <8-9>  MnSymbolE-Bold8
   <9-10> MnSymbolE-Bold9
  <10-12> MnSymbolE-Bold10
  <12->   MnSymbolE-Bold12
}{}
\let\llangle\@undefined
\let\rrangle\@undefined
\DeclareMathDelimiter{\llangle}{\mathopen}%
                     {MnLargeSymbols}{'164}{MnLargeSymbols}{'164}
\DeclareMathDelimiter{\rrangle}{\mathclose}%
                     {MnLargeSymbols}{'171}{MnLargeSymbols}{'171}
\title[flag transitive geometries with trialities and no dualities for $\Sz(q)$]{flag transitive geometries with trialities and no dualities coming from Suzuki Groups}
\author{Dimitri Leemans}\thanks{This research was made possible thanks to an Action de Recherche Concert\'ee grant from the Communaut\'e Fran\c caise Wallonie-Bruxelles.}
\address{Dimitri Leemans, Universit\'e Libre de Bruxelles, D\'epartement de Math\'ematique, C.P.216 - Alg\`ebre et Combinatoire, Boulevard du Triomphe, 1050 Brussels, Belgium, Orcid number 0000-0002-4439-502X.}
\curraddr{}
\email{leemans.dimitri@ulb.be}
\urladdr{}
\author{Klara Stokes}
\address{Klara Stokes, Department of Mathematics and Mathematical Statistics, Ume\aa\; University,
901 87 Ume\aa, Sweden, Orcid number 0000-0002-5040-2089.}
\email{klara.stokes@umu.se}
\author{Philippe Tranchida}
\address{Philippe Tranchida, Universit\'e Libre de Bruxelles, D\'epartement de Math\'ematique, C.P.216 - Alg\`ebre et Combinatoire, Boulevard du Triomphe, 1050 Brussels, Belgium, Orcid number 0000-0003-0744-4934.}
\curraddr{}
\email{tranchida.philippe@gmail.com}
\urladdr{}
\date{\today}
\subjclass{51A10, 51E24, 20C33}{}
\keywords{Incidence geometry, triality, Suzuki groups}
\begin{document}

\maketitle
\begin{abstract}
Recently, Leemans and Stokes constructed an infinite family of incidence geometries admitting trialities but no dualities from the groups $\PSL(2,q)$ (where $q=p^{3n}$ with $p$ a prime and $n>0$ a positive integer). Unfortunately these geometries are not flag transitive.
In this paper, we construct the first infinite family of incidence geometries of rank three that are flag transitive and have trialities but no dualities.
These geometries are constructed using chamber systems of Suzuki groups $\Sz(q)$ (where $q=2^{2e+1}$ with $e$ a positive integer and $2e+1$ is divisible by 3)  
and the trialities come from field automorphisms. We also construct an infinite family of regular hypermaps with automorphism group $\Sz(q)$ that admit trialities but no dualities.
\end{abstract}

\section{Introduction}

In~\cite{LeemansStokes2019}, Leemans and Stokes showed how to construct coset geometries having trialities by using a group $G$ that has outer automorphisms of order three. They gave examples for the smallest simple Suzuki group $\Sz(8)$, some small $\PSL(2,q)$ groups (with $q\in \{8, 27, 64, 125, 512\}$), and the Mathieu group $M_{11}$.
They used this technique in~\cite{leemans2022incidence} to construct the first infinite family of coset geometries admitting trialities but no dualities. These geometries are of rank four, and constructed using regular maps of Wilson class III for the groups $\PSL(2,q)$, where $q = p^{3n}$ for a prime $p$ and a positive integer $n > 0$.
These geometries are not flag transitive. The group $\PSL(2,q)$ has two orbits on their chambers.

The geometries for $\Sz(8)$ mentioned in~\cite{LeemansStokes2019} are known to be flag transitive thanks to~\cite{Leemans1998} where Leemans classified all thin flag transitive geometries having a group $\Sz(8)$ as type-preserving automorphism group.
Among the 183 thin geometries found for $\Sz(8)$, four of them admit trialities but no dualities.
Nevertheless, in~\cite{LeemansStokes2019}, Leemans and Stokes point out that they do not know if, in the general case, the regular hypermaps they construct are flag transitive coset geometries.
The goal of this article is to prove that at least some of these hypermaps are indeed flag transitive coset geometries, therefore providing the first infinite family of flag transitive geometries having trialities but no dualities.

Using chamber systems, we give a geometric description of rank three flag transitive coset geometries $\Gamma(G,(G_i)_{i \in \{0,1,2\}})$ admitting trialities but no dualities. Here $G = \Sz(q)$ is the Suzuki group over the finite field of order $q$ (where $q = 2^{2e+1}$ for some positive integer $e$ such that $2e+1$ is divisible by 3) and the $G_i$'s are suitably chosen dihedral subgroups of $G$. These geometries all have the following diagram.

\begin{center}

    \begin{tikzpicture}
    
   \filldraw[black] (-2,0) circle (2pt)  node[anchor=north]{};
   \filldraw[black] (2,0) circle (2pt)  node[anchor=north]{};
    \draw (-2,0) -- (2,0) node [midway, below = 3pt, fill=white]{$ \; \; \; \; \; 5 \; \; \; \; \; $};
       \filldraw[black] (-2,0) circle (2pt)  node[anchor=north]{};
   \filldraw[black] (0,3.46410161514) circle (2pt)  node[anchor=north]{};
    \draw (-2,0) -- (0,3.46410161514) node [midway, left = 4pt, fill=white]{$  5  $};
    \draw (2,0) -- (0,3.46410161514) node [midway, right = 4pt, fill=white]{$ 5 \ $};
    \end{tikzpicture}
\end{center}

Most of the work in this article generalizes to constructions of similar geometries with diagrams with labels any integer $m$ dividing one of $q-1$, $q+\sqrt{2q}+1$ or $q-\sqrt{2q}+1$ instead of 5. At the moment, we can only prove that these geometries are flag transitive for $m = 5$. 
For any integer $m$ dividing one of $q-1$, $q+\sqrt{2q}+1$ or $q-\sqrt{2q}+1$ and such that $m \equiv 2 (\bmod\; 3)$, we nevertheless show that they are regular hypermaps, see Corollary \ref{cor:hypermaps}. These hypermaps also admit trialities but no dualities, in the sense of operations on hypermaps (see \cite{jones2010hypermap}).

Hypermaps in the context of Suzuki groups were already investigated in \cite{downs2016mobius}, where the authors give a formula to count the number of regular and orientably regular maps and hypermaps with automorphism group $\Sz(q)$.

The paper is organised as follows.
In Section~\ref{sec:prelim}, we recall the basic properties of the Suzuki groups and the notions in incidence geometry needed to understand this paper. We also recall the definition of an hypermap.
In Section~\ref{sec:main}, we construct the geometries mentioned above and show that they are residually connected, flag transitive, and that they admit trialities and no dualities.
We conclude the paper with a series of remarks in Section~\ref{sec:conclusion}.

\section{Preliminaries}\label{sec:prelim}

We start by an introduction to the Suzuki groups $\Sz(q)$ and some of their properties. We then define incidence geometries, chamber systems and hypermaps.

\subsection{Suzuki groups}\label{sec:Suzuki}

Here we introduce the Suzuki groups, one of the 18 infinite families of finite simple groups. We mainly follow the exposition in \cite{luneburg2012translation}, where many more details can be found.

Let $\FF = \FF(q)$ be the field of order $q = 2^{2e+1}$ for some integer $e \geq 1$. Let $\Phi \colon \FF \to \FF$ be the Frobenius automorphism sending $x$ to $x^2$ and let $\theta$ be the automorphism $ x \to x^r$ with $r = 2^{e+1}$, so that $\theta^2 = \Phi$. Let ${\sf P} = \PG(3,\FF)$ be the $3$-dimensional projective space over $\FF$ and let $(X_0:X_1:X_2:X_3)$ be homogeneous coordinates for $\sf P$.

Let $E$ be the plane of $\sf P$ defined by the equation $X_0 = 0$ and set $U = (0:1:0:0)$. Let 
\begin{equation*}
    x = X_2X_0^{-1} \quad\quad\quad\quad y = X_3X_0^{-1} \quad\quad\quad\quad z = X_1X_0^{-1}
\end{equation*}
be affine coordinates on the affine space ${\sf P}_E = \sf P$ \textbackslash$E$.
Finally, define $\OO$ to be the set of points of $\sf P$ consisting of $U$ together with all the points of ${\sf P}_E$ that satisfy the equation
\begin{equation*}
    z = xy + x^{\theta +2}+y^\theta
\end{equation*}
This set $\OO$ is called a Suzuki-Tits ovoid. The Suzuki group $G := \Sz(q)$ is then the group of projectivities of $\sf P$ that preserve $\OO$.  

The definition of ovoid is due to Tits \cite{Tits1962}. Let $\OO$ be a set of points of some projective space $\Sigma$. Then $\OO$ is called an {\em ovoid} if every line of $\Sigma$ intersects $\OO$ in at most two points, and if, for each point $p$, the union of the set of lines of $\Sigma$ intersecting $\OO$ in precisely $p$ is a hyperplane. It was proven by Tits that the set $\OO$ we defined here is indeed an ovoid under this definition.

We will mostly understand the Suzuki group $G$ by its action on $\OO$. We very briefly state some of the properties of this action.
The group $G$ acts doubly transitively on $\OO$ and only the identity element fixes $3$ points of $\OO$. Let $S$ be a Sylow $2$-subgroup of $G$. Then $S$ is contained in its normalizer $N_G(S)$, a Frobenius group which is the stabilizer $G_P$ of some point $P$ of $\OO$ and has cardinality $q^2(q-1)$. The center $Z(S)$ of $S$ is $Z(S) = \{\gamma \in S \mid \gamma^2 = 1 \}$ and is of cardinality $q$. All elements of even order of $G$ are conjugate to some element in $S$, and therefore have a unique fixed point. The stabilizer $G_{P,Q}$ of two points $P$ and $Q$ of $\OO$ is a cyclic group of order $q-1$, which is a Frobenius complement of $G_P$.

The cardinality of $G$ is $(q^2 +1)q^2(q-1)$, which can be deduced from the double transitivity together with $|G_{P,Q}| = q-1$.

The Frobenius automorphism $\Phi$ also acts on $\OO$. Conjugating elements of $G$ by the action of $\Phi$ gives outer automorphisms of $G$. In fact, $\Out(G)$ is cyclic of order $2e+1$ and is generated by the action of $\Phi$. 


The structure of the maximal subgroups of $G$ is very well understood. 
For two groups $H$ and $K$, the notation $H : K$ denotes a semi-direct product of $H$ by $K$. Here are the facts we will need about maximal subgroups, the proofs can be found in \cite{Suzuki1962}.

\begin{theorem}\label{thm:maximalsubgroups}
Let $\alpha_q = 2^{2e+1} + 2^{e+1} +1$ and $\beta_q =  2^{2e+1} - 2^{e+1} +1$.
The maximal subgroups of $\Sz(q)$ are all conjugate to one of the following.
\begin{enumerate}
    \item A Suzuki subgroup $\Sz(q_0)$ where $\FF_{q_0} \subset \FF_q$ is a field extension with $q_0 = 2^{2e_0+1}$ for some positive integer $e_0$ and $q_0$ is maximal for this property,
    \item A dihedral group $D_{2(q-1)}$,
    \item A group isomorphic to $C_{\alpha_q} : C_4$,
    \item A group isomorphic to $C_{\beta_q} : C_4$.
\end{enumerate}
Moreover, the cyclic subgroups $C_{q-1}, C_{\alpha_q}$ and $C_{\beta_q}$ all intersect trivially.
\end{theorem}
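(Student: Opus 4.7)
The plan is to follow Suzuki's original classification via the doubly transitive action of $G = \Sz(q)$ on $\OO$. The first step is arithmetic: with $r = 2^{e+1}$ one has $r^2 = 2q$, so
\[
\alpha_q \beta_q = (q+r+1)(q-r+1) = q^2 + 2q + 1 - r^2 = q^2 + 1,
\]
and therefore $|G| = q^2(q-1)\alpha_q\beta_q$. The four factors are pairwise coprime: $q-1$ is odd and $q^2+1 \equiv 2 \pmod{q-1}$, while $\alpha_q - \beta_q = 2r$ is a $2$-power and both $\alpha_q, \beta_q$ are odd. Hence by Hall's theorem the subgroups $C_{q-1}, C_{\alpha_q}, C_{\beta_q}$ are cyclic Hall subgroups of pairwise coprime order, and any two of them intersect trivially, which establishes the ``moreover'' part of the statement.

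The second step is to classify the elements of $G$ by their fixed points on $\OO$. Using $2$-transitivity, the hypothesis that only the identity fixes three points of $\OO$, and the orders $|G_P| = q^2(q-1)$ and $|G_{P,Q}| = q-1$, one shows: every element of $2$-power order has a unique fixed point (and lies in a Sylow $2$-subgroup); every element of odd order dividing $q-1$ fixes exactly two points; and every element of order dividing $\alpha_q$ or $\beta_q$ acts fixed-point-freely on $\OO$. An orbit-counting argument exploiting $\alpha_q\beta_q = q^2+1$ then pins down $C_{\alpha_q}$ and $C_{\beta_q}$ as acting semi-regularly on $\OO$ with orbit sizes $\alpha_q$ and $\beta_q$ respectively.

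Now let $M \le G$ be maximal and analyse its action on $\OO$. If $M$ fixes a point $P$, then $M \le G_P = N_G(S)$. Otherwise the odd part of $M$ must lie in a single one of $C_{q-1}, C_{\alpha_q}, C_{\beta_q}$ (coprimality forbids mixing); maximality then forces $M$ to be the full normalizer, which a direct computation identifies as $D_{2(q-1)}$, $C_{\alpha_q}\!:\!C_4$, or $C_{\beta_q}\!:\!C_4$ respectively, the $C_4$ arising as $N_G(\langle c\rangle)/C_G(\langle c\rangle)$ for a generator $c$ of the cyclic factor. The remaining possibility, and the main obstacle, is that $M$ contains elements of several distinct torus types together with involutions yet has no global fixed point. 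Here one must show that $M$ stabilises a proper sub-ovoid $\OO_0 \subset \OO$; leveraging the explicit defining equation $z = xy + x^{\theta+2} + y^\theta$ and the action of the Frobenius $\Phi$, one realises $\OO_0$ as the Suzuki-Tits ovoid of a subfield $\FF_{q_0} \subset \FF_q$, so $M \le \Sz(q_0)$, and maximality yields $M = \Sz(q_0)$ with $q_0$ maximal. This last recognition step, where the ovoid geometry over a subfield must be extracted from the abstract subgroup structure, is the delicate point in Suzuki's original argument.
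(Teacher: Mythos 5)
The paper does not actually prove this statement: it is quoted from Suzuki's 1962 classification, and the proof is delegated wholesale to that reference. Your argument for the ``moreover'' clause is correct and complete: $\alpha_q\beta_q=(q+1)^2-2q=q^2+1$, the factors $q^2$, $q-1$, $\alpha_q$, $\beta_q$ of $|G|$ are pairwise coprime, so $C_{q-1}$, $C_{\alpha_q}$, $C_{\beta_q}$ are Hall subgroups of pairwise coprime order and intersect trivially. The overall architecture of your sketch --- classify elements by their fixed points on $\OO$, identify the torus normalizers, and isolate the subfield subgroups --- is indeed Suzuki's.

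As a proof of the classification itself, however, there are genuine gaps. First, your own case division exposes a defect: if $M$ fixes a point $P$, maximality gives $M=G_P\cong q^2\!:\!(q-1)$, which \emph{is} a maximal subgroup of $\Sz(q)$ but does not appear in the theorem's list; you neither derive a contradiction in this branch nor observe that the statement as reproduced in the paper omits the Borel subgroup. Second, the entire content of the hardest case is deferred: you assert that a maximal subgroup meeting several torus classes with no global fixed point must stabilize a sub-ovoid defined over a subfield, and you explicitly label this ``the delicate point'' without supplying the argument --- but that recognition step is the bulk of Suzuki's proof, so what you have is an accurate road map rather than a proof. A smaller issue: ``coprimality forbids mixing'' is not by itself a reason the odd part of $M$ lies in a single torus, since $G$ itself contains elements of all three coprime orders; the dichotomy between a single torus type and a subfield subgroup requires the fixed-point and orbit-counting analysis you only gesture at.
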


We conclude this section by a technical lemma that will be of crucial importance later.

\begin{lemma}\label{lem:conjClasses}\cite[Lemma 24.3]{luneburg2012translation}
    Let $A$ be the set of involutions of $G$ fixing a point $P \in \OO$ and let $B$ be the set of involutions of $G$ fixing $Q \in \OO$. Then, for any $\omega \in A, \zeta,\zeta'\in B$ we have that $\omega\zeta$ is never conjugated to $\omega\zeta'$.
\end{lemma}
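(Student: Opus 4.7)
The plan is a double-counting argument. Fix $\omega \in A$ and consider the multiset of $G$-conjugacy classes
$\mathcal{M}_\omega := \{[\omega\zeta]_G : \zeta \in B\}$; this has total size $q-1$. I want to show that each class appears with multiplicity one, and I will do so by computing the contribution of each class both from the point of view of a single $\omega$ and from the point of view of the whole set $A$.

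First I would show that $\mathcal{M}_\omega$ is independent of $\omega \in A$. The stabilizer $G_{P,Q}$ is cyclic of order $q-1$ and is a Frobenius complement in both $G_P$ and $G_Q$, so it acts regularly by conjugation on $A = Z(S_P) \setminus \{1\}$ and on $B = Z(S_Q) \setminus \{1\}$. If $c \in G_{P,Q}$ satisfies $c\omega_1 c^{-1} = \omega_2$, then the map $\zeta \mapsto c\zeta c^{-1}$ permutes $B$ and $\omega_2 (c\zeta c^{-1}) = c(\omega_1 \zeta) c^{-1}$, which gives $\mathcal{M}_{\omega_1} = \mathcal{M}_{\omega_2}$.

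Next I would compute
\[
N_\CC := |\{(\omega, \zeta) \in A \times B : \omega\zeta \in \CC\}|
\]
for each $G$-conjugacy class $\CC$ of elements of odd order $> 1$. Fixing $\tau_0 \in \CC$, the quantity $N_\CC \cdot |C_G(\tau_0)|$ counts triples $(\omega,\zeta,g) \in A\times B\times G$ with $g^{-1}\omega\zeta g = \tau_0$. Substituting $\tilde\omega = g^{-1}\omega g$ and $\tilde\zeta = g^{-1}\zeta g$ turns this into a count of pairs of involutions $(\tilde\omega,\tilde\zeta)$ with product $\tau_0$, together with a choice of $g$ sending the pair of fixed points $(\text{fix}(\tilde\omega),\text{fix}(\tilde\zeta))$ to $(P,Q)$. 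The two fixed points are automatically distinct (else $\tilde\omega$ and $\tilde\zeta$ would commute and $\tau_0$ would be an involution), so $2$-transitivity of $G$ on $\OO$ yields exactly $|G_{P,Q}| = q-1$ such $g$'s per pair, and the number of pairs equals the number of involutions of $G$ inverting $\tau_0$, since $\tilde\zeta = \tilde\omega\tau_0$ is an involution if and only if $\tilde\omega$ inverts $\tau_0$. By Theorem~\ref{thm:maximalsubgroups}, $\tau_0$ lies in a unique maximal cyclic subgroup $T$ (equal to $C_G(\tau_0)$) of one of the three types, and $N_G(\langle\tau_0\rangle) = N_G(T)$ is either $D_{2(q-1)}$ or $T \rtimes C_4$. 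A direct check in this semidirect product shows that the involutions of $N_G(T)$ form a single coset of $T$ of size $|T|$, all inverting $\tau_0$. Hence $N_\CC = (q-1)|T|/|T| = q-1$.

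Combining the two ingredients, if a class $\CC \in \mathcal{M}_\omega$ has multiplicity $m$, then summing over $\omega \in A$ gives $N_\CC = |A|\cdot m = (q-1)m$; together with $N_\CC = q-1$ this forces $m=1$, so $\omega\zeta$ and $\omega\zeta'$ are non-conjugate for distinct $\zeta,\zeta' \in B$. The main technical obstacle is the inversion count in the two $T \rtimes C_4$ cases: one must verify that the unique involution of the $C_4$ complement acts by inversion on $T$, which follows from the congruences $q^2 \equiv -1 \pmod{\alpha_q}$ and $q^2 \equiv -1 \pmod{\beta_q}$ (immediate from $\alpha_q \beta_q = q^2+1$) together with the realization of a generator of the $C_4$ complement as a Frobenius-type automorphism $x \mapsto x^q$ of $T$.
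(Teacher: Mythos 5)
The paper does not prove this lemma at all: it is imported verbatim as \cite[Lemma 24.3]{luneburg2012translation}, so there is no internal argument to compare against. Your double-counting proof is correct and essentially reconstructs the classical argument from the structure theory of $\Sz(q)$. The three ingredients all check out: (i) the regular action of the cyclic group $G_{P,Q}$ on $A=Z(S_P)\setminus\{1\}$ and on $B$ (Frobenius complement acting fixed-point-freely on a set of matching cardinality $q-1$) does make the multiset $\mathcal{M}_\omega$ independent of $\omega$; (ii) the count $N_\CC\cdot|C_G(\tau_0)| = (q-1)\cdot\#\{\text{involutions inverting }\tau_0\}$ is right, the fixed points of $\tilde\omega$ and $\tilde\zeta$ being distinct exactly because $\tau_0$ has odd order; and (iii) the inversion count $|T|$ is correct in all three normalizer types, and the resulting $N_\CC=q-1$ is consistent with the global check $\sum_\CC N_\CC=(q-1)^2=|A||B|$ against the $q-1$ classes of nontrivial odd-order elements. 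Two points lean on structure facts of $\Sz(q)$ that the paper's Theorem~\ref{thm:maximalsubgroups} does not literally state, though both are in Suzuki's and L\"uneburg's treatments: that $C_G(\tau_0)$ equals the maximal cyclic (torus) subgroup containing $\tau_0$, and the realization of the $C_4$-complement generator as $x\mapsto x^q$. The second can be avoided entirely: the involution $c^2$ of the complement acts on the odd-order cyclic group $T$ without nontrivial fixed points (Frobenius), and any fixed-point-free automorphism of order two of a cyclic group of odd order is inversion, since $n\mid k^2-1$ and $\gcd(k-1,n)=1$ force $k\equiv-1\pmod n$. With that substitution your argument is self-contained relative to standard Suzuki-group facts, which is more than the paper offers for this lemma.
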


\subsection{Incidence and coset geometries}
To their core, most of the geometric objects of interest to mathematicians are composed of elements together with some relation between them. This very general notion is made precise by the notion of an incidence system, or an incidence geometry. For a more detailed introduction to incidence geometry, we refer to~\cite{buekenhout2013diagram}.

    A triple $\Gamma = (X,*,t)$ is called an \textit{incidence system} over $I$ if
    \begin{enumerate}
        \item $X$ is a set whose elements are called the \textit{elements} of $\Gamma$,
        \item $*$ is a symmetric and reflexive relation (called the \textit{incidence relation}) on $X$, and
        \item $t$ is a map from $X$ to $I$, called the \textit{type map} of $\Gamma$, such that distinct elements $x,y \in X$ with $x * y$ satisfy $t(x) \neq t(y)$. 
    \end{enumerate}
Elements of $t^{-1}(i)$ are called the elements of type $i$.
The \textit{rank} of $\Gamma$ is the cardinality of the type set $I$.
A \textit{flag} in an incidence system $\Gamma$ over $I$ is a set of pairwise incident elements. The type of a flag $F$ is $t(F)$, that is the set of types of the elements of $F.$ A \textit{chamber} is a flag of type $I$. An incidence system $\Gamma$ is an \textit{incidence geometry} if all its maximal flags are chambers.

Let $F$ be a flag of $\Gamma$. An element $x\in X$ is {\em incident} to $F$ if $x*y$ for all $y\in F$. The \textit{residue} of $\Gamma$ with respect to $F$, denoted by $\Gamma_F$, is the incidence system formed by all the elements of $\Gamma$ incident to $F$ but not in $F$. The \textit{rank} of the residue $\Gamma_F$ is equal to rank$(\Gamma)$ - $|F|$.

The \textit{incidence graph} of $\Gamma$ is a graph with vertex set $X$ and where two elements $x$ and $y$ are connected by an edge if and only if $x * y$. Whenever we talk about the distance between two elements $x$ and $y$ of a geometry $\Gamma$, we mean the distance in the incidence graph of $\Gamma$ and simply denote it by $d_\Gamma(x,y)$, or even $d(x,y)$ if the context allows.

Let $\Gamma = \Gamma(X,*,t)$ be an incidence geometry over the type set $I$. A {\em correlation} of $\Gamma$ is a bijection $\phi$ of $X$ respecting the incidence relation $*$ and such that, for every $x,y \in X$, if $t(x) = t(y)$ then $t(\phi(x)) = t(\phi(y))$. If, moreover, $\phi$ fixes the types of every element (i.e $t(\phi(x)) = t(x)$ for all $x \in X$), then $\phi$ is said to be an {\em automorphism} of $\Gamma$. The \emph{type} of a correlation $\phi$ is the permutation it induces on the type set $I$. A correlation of type $(i,j)$ is called a duality if it has order $2$ and a correlation of type $(i,j,k)$ is called a triality if it has order $3$. The group of all correlations of $\Gamma$ is denoted by $\Cor(\Gamma)$ and the automorphism group of $\Gamma$ is denoted by $\Aut(\Gamma)$. Remark that $\Aut(\Gamma)$ is a normal subgroup of $\Cor(\Gamma)$ since it is the kernel of the action of $\Cor(\Gamma)$ on $I$.

If $\Aut(\Gamma)$ is transitive on the set of chambers of $\Gamma$ then we say that $\Gamma$ is {\em flag transitive}. If moreover, the stabilizer of a chamber in $\Aut(\Gamma)$ is reduced to the identity, we say that $\Gamma$ is {\em simply transitive} or {\em regular}.

Incidence geometries can be obtained from a group $G$ together with a set $(G_i)_{i \in I}$ of subgroups of $G$ as described in~\cite{Tits1957}. 
    The \emph{coset geometry} $\Gamma(G,(G_i)_{i \in I})$ is the incidence geometry over the type set $I$ where:
    \begin{enumerate}
        \item The elements of type $i \in I$ are right cosets of the form $G_i \cdot g$, $g \in G$.
        \item The incidence relation is given by non empty intersection. More precisely, the element $G_i \cdot g$ is incident to the element $G_j \cdot k$ if and only if $i\neq j$ and $G_i \cdot g \cap G_j \cdot k \neq \emptyset$.
    \end{enumerate}
\subsection{Chamber systems}
The concept of chamber system was invented by Tits in~\cite{tits1982}.
A \textit{chamber system} over $I$ is a pair $\CC=(C,\{\sim_i,i\in I\})$ consisting of a set $C$, whose members are called \textit{chambers}, and a collection of equivalence relations $\sim_i$ on $C$, indexed by $i \in I$.
Two chambers $c$ and $d$ are called $i-$\textit{adjacent} if $c \sim_i d$. For $i\in I$, each $\sim_i$-equivalence class is called an \textit{i-panel}.
The \textit{rank} of $\CC$ is $|I|$. The chamber system $\CC$ is called \textit{thin} if every $i-$panel is of size exactly $2$.

A \textit{weak homomorphism} $\varphi \colon (C,\{\sim_i,i\in I\}) \to (C',\{\sim_{i'},i'\in I\})$ of chamber systems over $I$ is a map $\varphi \colon C \to C'$ for which a permutation $\pi$ of $I$ can be found such that, for all $c,d \in C$, the relation $c \sim_i d$ implies $\varphi(c) \sim_{\pi(i)} \varphi(d)$. If $\pi = \Id$, the weak homomorphism $\varphi$ is said to be a \textit{homomorphism}.
A bijective homomorphism whose inverse is an homomorphism is called an \textit{isomorphism} and an isomorphism from $\CC$ to $\CC$ is called an \textit{automorphism} of $\CC$. We denote by $\Aut(\CC)$ the group of all automorphisms of $\CC$

The \textit{graph} of $\CC$ is the graph whose vertices are the chambers of $\CC$ and where two chambers $c,d$ are connected by an edge if there exists an $i\in I$ such that $c \sim_i d$.
The chamber system $\CC$ is \textit{connected} if the graph of $\CC$ is connected.

For $J \subset I$, we denote by $\sim_J$ the union of all $\sim_j$ with $j\in J$. A connected component of $(\CC,\sim_J)$ is called a \textit{J-cell} of $\CC$. For $i\in I$, the $(I$ \textbackslash $\{i\})$-cells are called $i$-objects of $\CC$.

If $\CC$ is a chamber system, the \textit{incidence system of $\CC$}, denoted by $\Gamma(\CC)$, is the incidence system over $I$ determined as follows. Its $i$-elements, for $i\in I$ are the pairs $(x,i)$ with $x$ an $i$-object of $\CC$; two elements $(x,k)$ and $(y,l)$ of $\Gamma(\CC)$ are incident if and only if $x \cap y \neq \emptyset$ in $\CC$, i.e., $x$ and $y$ have a chamber in common.

A chamber system $\CC$ over $I$ is \textit{residually connected} if, for every subset $J$ of $I$ and every system of $j$-objects $Z_j$, one for each $j \in J$, with the property that any two have a non-empty intersection, it follows that $\bigcap_{j \in J} Z_j$ is an $(I$ \textbackslash$J$)-cell.

Let $\psi \colon G \to \Aut(\CC)$ be a representation of a group $G$ on a chamber system $\CC$. When $G$ is transitive on the chambers of $\CC$, we say that $G$ is \textit{chamber transitive} on $\CC$. We also say that $\CC$ is \textit{chamber transitive} if $\Aut(\CC)$ is chamber transitive.

Let $G$ be a group, $B$ a subgroup, $(G^{(i)})_{i\in I}$ a system of subgroups of $G$ with $B \subset G^{(i)}$. The \textit{coset chamber system} of $G$ on $B$ with respect to $(G^{(i)})_{i\in I}$, denote by $\CC(G,B,(G^{(i)})_{i\in I})$, has the chamber set consisting of all cosets $gB, g\in G$, and $i$-adjacency determined by $gB \sim_i hB$ if and only if $gG^{(i)} = hG^{(i)}$.

For $i\in I$, the group $G^{(i)}$ is called the \textit{standard parabolic subgroup} of type $I \textbackslash \{i\}$. The subgroup $B$ is called the \textit{Borel subgroup} of $G$. In the cases we will explore in this article, $B$ will always be the identity subgroup.

\begin{proposition}\cite[Proposition 3.6.4]{buekenhout2013diagram}\label{prop:CosetChamberSystem}
    If $\psi \colon G \to \Aut(\CC)$ is a chamber transitive representation of $G$ on a chamber system $\CC$ over $I$, then, for every chamber $c$ of $\CC$, the canonical representation of $G$ on $\CC(G,B,(G^{(i)})_{i\in I})$ is equivalent to $\psi$, where $G^{(i)}$ is the stabilizer of the $i$-cell containing $c$.
\end{proposition}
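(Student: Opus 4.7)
The plan is to exhibit, for any base chamber $c$, a $G$-equivariant isomorphism of chamber systems between $\CC(G,B,(G^{(i)})_{i\in I})$ and $\CC$. Set $B = \Stab_G(c)$ and $G^{(i)} = \Stab_G(P_i)$, where $P_i$ is the unique $i$-panel of $\CC$ containing $c$ (note $B \subset G^{(i)}$ since any element fixing $c$ certainly stabilizes the panel through $c$). Define $\phi \colon G/B \to C$ by $\phi(gB) = \psi(g)(c)$. Well-definedness is immediate from $B = \Stab_G(c)$; chamber transitivity of $\psi$ gives surjectivity; and $\phi(gB) = \phi(hB)$ forces $\psi(h^{-1}g)(c) = c$, hence $h^{-1}g \in B$, giving injectivity. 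The map is $G$-equivariant because $\phi(k\cdot gB) = \psi(kg)(c) = \psi(k)(\phi(gB))$.

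The substantive step is verifying that $\phi$ preserves and reflects $i$-adjacency. By definition, $gB \sim_i hB$ in the coset chamber system iff $h^{-1}g \in G^{(i)}$, i.e.\ iff $\psi(h^{-1}g)$ stabilizes the panel $P_i$. The key small observation is that an automorphism $\alpha \in \Aut(\CC)$ stabilizes $P_i$ setwise if and only if $\alpha(c) \in P_i$: the forward direction is trivial, and conversely, if $\alpha(c) \in P_i$, then for any $d \sim_i c$ we have $\alpha(d) \sim_i \alpha(c)$ (since $\alpha$ is an automorphism), so $\alpha(d)$ lies in the same $i$-panel as $\alpha(c)$, which is $P_i$. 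Applying this to $\alpha = \psi(h^{-1}g)$, we obtain that $h^{-1}g \in G^{(i)}$ iff $\psi(h^{-1}g)(c) \sim_i c$, which (using that $\psi(h)$ is an automorphism, so it preserves $\sim_i$) is equivalent to $\psi(g)(c) \sim_i \psi(h)(c)$, i.e.\ $\phi(gB) \sim_i \phi(hB)$ in $\CC$.

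Thus $\phi$ is a $G$-equivariant bijection that preserves and reflects each $\sim_i$, hence an isomorphism of chamber systems intertwining the canonical action of $G$ on $\CC(G,B,(G^{(i)})_{i\in I})$ with $\psi$. The only nontrivial ingredient is the panel-stabilization criterion above; everything else is bookkeeping about cosets and the defining property of $B$ as the stabilizer of $c$.
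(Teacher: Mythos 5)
The paper gives no proof of this proposition --- it is quoted directly from Buekenhout--Cohen --- so there is nothing internal to compare against; your argument is correct and is the standard one: the orbit map $gB \mapsto \psi(g)(c)$ is a $G$-equivariant bijection, and the panel-stabilization criterion is precisely what identifies $i$-adjacency on the two sides. The only detail worth tightening is that your converse argument for that criterion establishes only $\alpha(P_i) \subseteq P_i$; for genuine setwise stabilization, note that $\alpha(c) \sim_i c$ gives $c \sim_i \alpha^{-1}(c)$ (apply the automorphism $\alpha^{-1}$), so the same containment argument applied to $\alpha^{-1}$ yields $\alpha^{-1}(P_i) \subseteq P_i$ and hence $\alpha(P_i) = P_i$. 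With that one line added, the proof is complete and matches what one finds in the cited source.
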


This proposition allows us to work with chamber transitive chamber systems in a group theoretical way. The notion of connectedness and residual connectedness can then also be translated into group theoretical language.

\begin{lemma}\cite[Lemma 3.6.7]{buekenhout2013diagram} \label{lem:chanmberSystemConnectedness}
    Let $G$ be a group, $B$ a subgroup of $G$, $(G^{(i)})_{i\in I}$ a system of subgroups of $G$ with $B \leq G^{(i)}$. The chamber system $\CC(G,B,(G^{(i)})_{i\in I})$ is connected if and only if $G$ is generated by the subgroups $G^{(i)}$, $i\in I$.
\end{lemma}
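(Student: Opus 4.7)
The statement is an ``if and only if'' between a combinatorial property (connectedness of the coset chamber system) and an algebraic property (the parabolics generate $G$), so the natural plan is to treat the two directions separately and show, in each case, how a path in the chamber graph corresponds bijectively (up to bookkeeping) to a factorisation of a group element as a product of elements of the standard parabolics.

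First I would unpack the definition of adjacency in $\CC=\CC(G,B,(G^{(i)})_{i\in I})$. By the defining relation, two chambers $gB$ and $hB$ are $i$-adjacent if and only if $gG^{(i)}=hG^{(i)}$, which happens if and only if $g^{-1}h\in G^{(i)}$. Thus a gallery $g_0 B \sim_{i_1} g_1 B \sim_{i_2} \cdots \sim_{i_n} g_n B$ is the same data as a sequence $(h_1,\dots,h_n)$ with $h_k\in G^{(i_k)}$ and $g_k=g_{k-1}h_k$, so that $g_n=g_0 h_1 h_2\cdots h_n$. This observation is what powers both implications; the rest is routine translation.

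For the direction ``connected $\Rightarrow$ generated,'' I would fix an arbitrary $g\in G$, apply connectedness to find a gallery from the base chamber $B$ to $gB$, and use the observation above to produce a factorisation $g\in B\cdot h_1\cdots h_n\subseteq \langle G^{(i)}: i\in I\rangle$, using that $B\subseteq G^{(i)}$ for every $i$. Since $g$ was arbitrary, this gives the desired generation. For the converse, I would start from a decomposition $g=h_1 h_2\cdots h_n$ with $h_k\in G^{(i_k)}$ guaranteed by the generation hypothesis, and read this backwards: the sequence of chambers $B,\,h_1 B,\,h_1h_2 B,\dots,h_1\cdots h_n B=gB$ is a gallery because consecutive terms are $i_k$-adjacent by the computation $(h_1\cdots h_{k-1})^{-1}(h_1\cdots h_k)=h_k\in G^{(i_k)}$. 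This connects $B$ to the arbitrary chamber $gB$, and hence any two chambers via $B$.

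There is no real obstacle here; the only thing to be mildly careful about is the role of the Borel subgroup $B$, namely that $B\subseteq G^{(i)}$ for each $i$ (so moving inside a coset $gB$ does not change which $G^{(i)}$-coset one lies in), and the fact that the base chamber is $B$ itself (so that every chamber has the form $gB$). With these bookkeeping points in place, both implications reduce to the single correspondence ``galleries from $B$ to $gB$ $\longleftrightarrow$ factorisations of $g$ through the $G^{(i)}$,'' which makes the equivalence essentially tautological.
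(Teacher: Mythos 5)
Your argument is correct and complete: the translation between galleries in $\CC(G,B,(G^{(i)})_{i\in I})$ based at $B$ and factorisations $g\in Bh_1\cdots h_nB$ with $h_k\in G^{(i_k)}$ is exactly the standard proof of this fact, and your handling of the two bookkeeping points (that $B\leq G^{(i)}$ makes the adjacency well defined on cosets, and that connecting every $gB$ to the base chamber $B$ suffices) is what makes it go through. Note that the paper itself gives no proof here --- the lemma is quoted from Buekenhout--Cohen \cite[Lemma 3.6.7]{buekenhout2013diagram} --- so there is nothing to compare against beyond the reference, whose proof is the same gallery-to-factorisation correspondence you describe.
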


For $G$ a group with a system of subgroups $(G^{(i)})_{i\in I}$ and for $J \subset I$, we write $G^{(J)} = \langle G^{(j)} \mid j\in J \rangle$.
We use the following theorem. Observe that in~\cite{buekenhout2013diagram}, the hypothesis that the chamber system is connected is missing.
\begin{theorem}\cite[Theorem 3.6.9]{buekenhout2013diagram} \label{thm:chamberRC}
    Let $I$ be a finite index set. Suppose that $G$ is a group with a system of subgroups $(G^{(i)})_{i\in I}$ and that $B$ is a subgroup of $\bigcap_{i \in I} G^{(i)}$. If the chamber system $\CC(G,B,(G^{(i)})_{i\in I})$ is connected, the following statements are equivalent.
    \begin{enumerate}
        \item The chamber system $\CC(G,B,(G^{(i)})_{i\in I})$ is residually connected.
        \item For all $J,K,L \subseteq I$, we have $G^{(L)} \cap G^{(J)}G^{(K)} = G^{(L\cap J)}G^{(L\cap K)}$.
        \item For all $J,K,L \subseteq I$, we have $G^{(J)}G^{(L)} \cap G^{(K)}G^{(L)} = G^{(J\cap K)}G^{(L)}$.
    \end{enumerate}
\end{theorem}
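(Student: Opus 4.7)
I would follow the strategy of Buekenhout--Cohen, whose central idea is that in a connected coset chamber system $\CC(G,B,(G^{(i)})_{i\in I})$ every $J$-cell is a single right coset of $G^{(J)} = \langle G^{(j)} \mid j \in J \rangle$, so that questions about intersections of cells translate into questions about products and intersections of subgroups. The preliminary step is therefore to upgrade Lemma~\ref{lem:chanmberSystemConnectedness} to arbitrary subsets $J \subseteq I$, by applying it to the restricted chamber system whose adjacencies are only the $\sim_j$ with $j \in J$: the $\sim_J$-connected component of $gB$ then equals $gG^{(J)}$. Once this dictionary is in place, (1), (2) and (3) all become algebraic assertions about the subgroups $G^{(J)}$.

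The easiest piece is (2) $\Leftrightarrow$ (3). Specialising (2) with one of the two factors in the product taken to be $\emptyset$ (so that $G^{(\emptyset)} = B$) gives the intersection property $G^{(A)} \cap G^{(C)} B = G^{(A\cap C)}B$ for all $A, C \subseteq I$. Given a relation $ab = cd$ with $a \in G^{(J)}$, $b,d \in G^{(L)}$, $c \in G^{(K)}$, I would then analyse $c^{-1}a \in G^{(L)} \cap G^{(K)}G^{(J)}$ using (2) applied to the triple $(K,J,L)$, deduce via the intersection property that $cu \in G^{(J \cap K)}B$ for the resulting factorisation, and reassemble to obtain $ab \in G^{(J \cap K)}G^{(L)}$. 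The reverse implication is symmetric, exchanging the role of the intersected subgroup with that of the common right factor.

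For (1) $\Leftrightarrow$ (2), the implication (1) $\Rightarrow$ (2) is almost immediate from the coset identification: given $x \in G^{(L)} \cap G^{(J)}G^{(K)}$ one reads off three pairwise intersecting cells (an $L$-cell, a $J$-cell, and a $K$-cell containing a common chamber of the first two), and residual connectedness forces a chamber common to all three and identifies the triple intersection with a single $(L \cap J \cap K)$-cell, which unpacked yields precisely the equality in (2). For (2) $\Rightarrow$ (1), I would proceed by induction on $|J|$: use the inductive hypothesis to collapse the pairwise intersections of $|J|-1$ of the given $j$-objects into a single cell, then apply (2) one more time to the triple built from this cell and the remaining $j^{\ast}$-object to identify the full intersection as an $(I \setminus J)$-cell.

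The main obstacle is this induction step, because (2) is only a three-subset statement while residual connectedness concerns a family of arbitrary size; one must iterate (2) carefully while tracking that every collapsed cell has exactly the type demanded by the next step. The need for the connectedness hypothesis, whose omission from the Buekenhout--Cohen statement the authors flag, shows up in the very first move of the plan: without connectedness, the $J$-cell containing $gB$ can be strictly smaller than $gG^{(J)}$, the dictionary between cells and subgroup cosets breaks down, and none of the translations above can be trusted.
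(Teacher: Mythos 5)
This statement is quoted from Buekenhout--Cohen (Theorem 3.6.9) and the paper gives no proof of it, so there is no in-paper argument to compare against; I can only assess your sketch on its own terms. The overall architecture is the right one: the cell--coset dictionary, the reduction of (1), (2), (3) to subgroup identities, and the factor-shuffling argument for (2) $\Leftrightarrow$ (3) (using the specialisation $G^{(\emptyset)}=B$ to get $G^{(A)}\cap G^{(C)}=G^{(A\cap C)}$ and then rebalancing $ab=cd$) all work and are essentially the standard proof.

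There is, however, a concrete error in your final paragraph, which matters because it is precisely the point the authors of this paper are flagging. You claim that without global connectedness ``the $J$-cell containing $gB$ can be strictly smaller than $gG^{(J)}$.'' That is false: by the definition of $i$-adjacency in a coset chamber system ($gB\sim_i hB$ iff $gG^{(i)}=hG^{(i)}$, with $B\leq G^{(i)}$), the $\sim_J$-connected component of $gB$ is always exactly $\{hB : h\in gG^{(J)}\}$, with no hypothesis on the whole system; this is just the $J$-local version of Lemma~\ref{lem:chanmberSystemConnectedness} and holds unconditionally. The true role of connectedness is different: conditions (2) and (3) only involve the subgroups $G^{(J)}\leq G^{(I)}$ and therefore cannot detect whether $G^{(I)}=G$. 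If $G^{(I)}\neq G$ the chamber system splits into several components, the $J=\emptyset$ instance of residual connectedness (the full chamber set must be a single $I$-cell) fails, and yet (2) and (3) can hold trivially --- e.g.\ take all $G^{(i)}=B=\{1\}$ in a nontrivial $G$. So connectedness is needed for (2)$\Rightarrow$(1), not to salvage the cell--coset dictionary. Separately, your (1)$\Leftrightarrow$(2) step is the thinnest part of the sketch: residual connectedness as defined speaks only of corank-one cells ($j$-objects), while (2) is an identity for arbitrary $J,K,L$, so the translation requires an explicit reduction between arbitrary cells and corank-one cells (and the induction on $|J|$ you mention must be set up so that each collapsed intersection is again a coset of the correct $G^{(I\setminus J')}$); as written this is asserted rather than proved.
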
 
Finally, if we restrict ourselves to residually connected incidence geometries and residually connected chamber system, we can go back and forth between the two notions freely.

\begin{theorem}\cite[Theorem 3.4.6]{buekenhout2013diagram}\label{thm:Correspondence} 
Let $\mathcal{G}(I)$ be the collection of all residually connected geometries over $I$ and let $\CC(I)$ be the collection of all residually connected chamber systems over $I$.
    There is a bijective homomorphism preserving structures between the collections $\mathcal{G}(I)$ and $\CC(I)$. Thus, each residually connected geometry over $I$ corresponds to a unique residually connected chamber system over $I$ (up to isomorphism), with same automorphism group, and vice versa.
\end{theorem}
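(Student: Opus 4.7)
The plan is to exhibit two structure-preserving constructions, one sending a residually connected geometry $\Gamma \in \mathcal{G}(I)$ to a residually connected chamber system $\CC(\Gamma) \in \CC(I)$, and the other sending a chamber system back to a geometry, then to show that these are mutually inverse up to isomorphism and that they induce the claimed isomorphism of automorphism groups. The map $\CC \mapsto \Gamma(\CC)$ is the one already defined just above the statement; for the reverse direction, given $\Gamma$, let $\CC(\Gamma)$ have the chambers of $\Gamma$ as its chambers, with two chambers declared $i$-adjacent when they agree in every type except possibly $i$.

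First I would verify that both constructions take values in the correct category. For $\CC \mapsto \Gamma(\CC)$, residual connectedness of $\Gamma(\CC)$ is essentially an unfolding of the definition: a residue in $\Gamma(\CC)$ of a flag $F$ corresponds to the intersection of the cells in $F$, and the definition of residual connectedness of $\CC$ guarantees this intersection is an $(I \setminus t(F))$-cell, hence connected. For $\Gamma \mapsto \CC(\Gamma)$, the crux is to identify the $(I \setminus \{i\})$-cells of $\CC(\Gamma)$ with the elements of type $i$ in $\Gamma$: two chambers of $\Gamma$ should lie in the same $(I \setminus \{i\})$-cell exactly when they share the same type-$i$ element. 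This follows from residual connectedness of $\Gamma$ applied inductively to residues, since any path of $j$-adjacencies with $j \neq i$ keeps the type-$i$ element fixed, and conversely connectedness of the relevant residues produces such a path between any two chambers sharing a common type-$i$ element.

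Next I would check that the two constructions are mutually inverse. The previous step yields a natural type-preserving bijection $\Gamma \to \Gamma(\CC(\Gamma))$, and it remains to verify that two elements of $\Gamma$ are incident if and only if they belong to a common chamber, if and only if the corresponding $i$-objects in $\CC(\Gamma)$ share a chamber; the forward direction is the definition of a geometry, the reverse is residual connectedness again. For $\CC \to \CC(\Gamma(\CC))$ one sends a chamber $c$ of $\CC$ to the maximal flag formed by the $(I \setminus \{i\})$-cells through $c$ as $i$ ranges over $I$; this is a bijection, and $i$-adjacency in $\CC(\Gamma(\CC))$ coincides with sharing all cells except the $(I \setminus \{i\})$-cell, which is precisely the original $i$-adjacency in $\CC$.

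Finally, a type-preserving morphism of geometries sends chambers to chambers and preserves the relation of differing only in type $i$, inducing a chamber system morphism; conversely, a chamber system morphism preserves cells and hence descends to a morphism of the associated incidence geometries, and these operations are mutually inverse on morphisms. Applied to automorphisms, this gives $\Aut(\Gamma) \cong \Aut(\CC(\Gamma))$. The main obstacle throughout is the careful deployment of residual connectedness: without it, neither the identification of $i$-objects with type-$i$ elements nor the recovery of incidence from common chambers is valid, so most of the technical effort goes into inductive arguments on the rank, reducing the connectedness statement in a residue of cotype $J$ to connectedness statements in residues of smaller cotype.
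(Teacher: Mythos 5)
The paper does not actually prove this statement: it is imported verbatim as Theorem 3.4.6 of Buekenhout--Cohen, so there is no internal proof to compare against. Your sketch reproduces the standard argument from that reference --- the two mutually inverse constructions $\Gamma \mapsto \CC(\Gamma)$ and $\CC \mapsto \Gamma(\CC)$, with residual connectedness doing exactly the work you identify (identifying the $(I\setminus\{i\})$-cells with the type-$i$ elements by an induction on rank, and recovering the chambers of the geometry as the $\emptyset$-cells) --- and is correct as an outline.
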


\subsection{Hypermaps}
Hypermaps were introduced in~\cite{cori}, see also~\cite{dessins, JonesSingerman}.
A hypermap  $\mathcal{M}$ is a transitive permutation representation $\Delta\rightarrow \textup{Sym}(\Sigma)$ of the group $$\Delta=\left\langle r_0,r_1,r_2~|~r_0^2=r_1^2=r_2^2=1\right\rangle\cong C_2*C_2*C_2,$$
where $\Sigma$ is a set, the elements of which are called flags. One can think of each flag as consisting of a hypervertex, a hyperedge and a hyperface. If the automorphism group $\textup{Aut}(M)$ of the hypermap acts transitively on the flags, the hypermap is called regular. In that case, the hypervertices, the hyperedges and the hyperfaces are the orbits of the dihedral subgroups $\langle r_1,r_2 \rangle$, $\langle r_2, r_0\rangle$ and $\langle r_0, r_1\rangle$, respectively and the hypermap is said to be of \textit{type} $(p,q,r)$ where $p$ is the order of $r_1r_2$, $q$ is the order of $r_2r_0$ and $r$ is the order of $r_0r_1$.

The coset geometry constructed from the automorphism group $\textup{Aut}(M)$ and the three dihedral subgroups is a rank 3 incidence geometry on the type set $\{\mbox{hypervertex, hyperedge, hyperface}\}$. A maximal flag (a chamber) of this incidence geometry is a triple consisting of one element of each type such that they are pairwise incident. 

It is important to note that the notion of a flag is different for hypermaps and incidence geometries; in a hypermap flag the incidence between the objects is a ternary relation, while the incidence relation in the coset geometry is binary. This implies that the coset geometry constructed from a regular hypermap can have more flags than the hypermap. In particular, the transitive action of the automorphism group on the flags of the hypermap does not imply the transitive action of the automorphism group on the chambers of the corresponding coset geometry. See for instance~\cite[Example 4.4]{hypertopes}.

\section{The Chamber System approach}\label{sec:main}
Let $e>1$ be a positive integer such that $3\mid 2e+1$ and let $q = 2^{2e+1}$.
Let $G\cong \Sz(q)$ be the Suzuki group acting on the $q^2+1$ points of the ovoid $\mathcal O$ (see section \ref{sec:Suzuki}).
The aim of this section is to construct a chamber system over $I:=\{0,1,2\}$ from $G$.
We first fix a few conventions. Regular letters like $q,p$ and $m$ designate integers. Capital letters like $P,Q,R$ designate points of $\OO$. Greek letters like $\rho,\gamma$ and $\tau$ designate group elements, with $\tau$ restricted to outer automorphisms of $G$ of order $3$.

Since $3\mid 2e+1$, the underlying field $\FF_q$ is a extension of degree 3 of a subfield $\FF_{q_0}$ and, therefore, there are outer automorphisms of $G$ of order $3$.
We will call any such automorphism a triality of $G$. Let $\tau$ be a triality of $G$ and let $P,Q,R$ be a triple of points of $\mathcal O$ permuted cyclically by $\tau$. Since the triality $\tau$ comes from the action of a power of the Frobenius automorphism $\Phi$ on the ovoid $\OO$, we can consider $\tau$ as a permutation of  the points $\OO$. With that in mind, for any point $P \in \OO$, $\tau(P)$ is just the image of $P$ by that permutation, while for an element $\gamma \in G$, we will denote by $\tau(\gamma) := \tau \gamma \tau^{-1} = \gamma^\tau$, the element of $G$ that sends any $X \in \OO$  to $\tau \circ \gamma \circ \tau^{-1} (X)$. As such, if $\gamma$ fixes some point $P \in \OO$, then $\tau(\gamma)$ fixes $\tau(P)$.

\begin{lemma}\label{lem:existence}
    Let $P$ be a point of $\OO$ and let $\tau$ be a triality of $G$ such that $\tau(P) \neq P$. Let $A$ be the set of involutions of $G$ with fixed point $P$. Then the set $\{\rho \cdot \tau(\rho) \mid \rho \in A\}$ contains only elements of odd order, no two of which are conjugate. 
\end{lemma}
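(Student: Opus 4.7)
The plan is to establish the two conclusions---odd order and pairwise non-conjugacy---in turn.

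\textbf{Odd order.} The key observation is that any two involutions of $G$ commute if and only if they share their unique fixed point on $\OO$: if $\sigma$ centralises the involution $\rho$, then $\sigma$ stabilises the singleton fixed-point set of $\rho$, and since $\sigma$ is itself an involution with a unique fixed point, both must coincide. Now $\rho$ fixes $P$ while $\tau(\rho)$ fixes $\tau(P)\neq P$, so the involutions $\rho$ and $\tau(\rho)$ do not commute. If $\rho\tau(\rho)$ had even order $2k$, then $(\rho\tau(\rho))^k$ would be a central involution of the dihedral group $\langle\rho,\tau(\rho)\rangle$, commuting with both $\rho$ and $\tau(\rho)$ and therefore forcing $P=\tau(P)$, a contradiction.

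\textbf{Reduction to a single equality.} Set $Q:=\tau(P)$ and $R:=\tau^2(P)$. The stabiliser $G_P$ is a Frobenius group $S_P\rtimes G_{P,Q}$ whose cyclic complement $G_{P,Q}$ of order $q-1$ acts fixed-point-freely on $A=Z(S_P)\setminus\{1\}$, a set of the same cardinality. Hence the action is regular, and every $\rho'\in A$ can be written uniquely as $\rho'=\rho^x$ for some $x\in G_{P,Q}$. A short computation yields the identity
\[
(\rho\tau(\rho))^x \;=\; \rho^x\,\tau\!\left(\rho^{\tau^{-1}(x)}\right),
\]
in which $\rho^x$ and $\rho^{\tau^{-1}(x)}$ both lie in $A$ (since $x$ and $\tau^{-1}(x)$ each fix $P$), so that their images under $\tau$ lie in the set $B$ of involutions fixing $Q$. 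Applying Lemma~\ref{lem:conjClasses} with $\omega:=\rho^x\in A$ and $\zeta:=\tau(\rho^{\tau^{-1}(x)})$, $\zeta':=\tau(\rho^x)\in B$, the products $\omega\zeta$ and $\omega\zeta'$ are never conjugate in $G$ unless $\zeta=\zeta'$. Since $(\rho\tau(\rho))^x$ is conjugate to $\rho\tau(\rho)$ and $\omega\zeta'=\rho'\tau(\rho')$, the whole non-conjugacy claim reduces to showing that $\rho^{\tau^{-1}(x)}\neq\rho^x$ whenever $x\neq 1$.

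\textbf{The main obstacle.} The equality $\rho^{\tau^{-1}(x)}=\rho^x$ is equivalent to the element $w:=\tau^{-1}(x)\,x^{-1}$ centralising $\rho$; since the centraliser in $G$ of any involution is the unique Sylow $2$-subgroup containing it, this forces $w\in S_P$, and in particular $w$ fixes only $P$ on $\OO$ when non-trivial. To extract a contradiction one exploits the fixed-point patterns of $x\in G_{P,Q}$ (fixing $P$ and $Q$) and $\tau^{-1}(x)\in G_{P,R}$ (fixing $P$ and $R$), together with the companion elements $\tau(w)=x\tau(x)^{-1}\in S_Q$ and $\tau^2(w)=\tau(x)\tau^{-1}(x)^{-1}\in S_R$ obtained by applying $\tau$ to the hypothesis, linked by the cocycle relation $w\cdot\tau(w)\cdot\tau^2(w)=1$. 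A careful bookkeeping of the actions of $w$, $\tau(w)$ and $\tau^2(w)$ on the three points $P$, $Q$ and $R$, using that only the identity of $G$ fixes three points of $\OO$, forces $\tau(x)$ to fix each of $P$, $Q$ and $R$ and hence $\tau(x)=1$, whence $x=1$. This fixed-point analysis is the technical heart of the proof.
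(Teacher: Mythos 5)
Your reduction and the odd-order argument are sound, and the conjugation identity $(\rho\tau(\rho))^x=\rho^x\tau\bigl(\rho^{\tau^{-1}(x)}\bigr)$ combined with Lemma~\ref{lem:conjClasses} is exactly the strategy of the paper's proof. The gap is in the step you yourself call ``the technical heart'': you never prove that $\rho^{\tau^{-1}(x)}\neq\rho^{x}$ for $x\neq 1$, and the method you sketch cannot do it. The difficulty is created by your choice of parametrising torus. Taking $x\in G_{P,Q}$ with $Q=\tau(P)$ puts $\tau^{-1}(x)$ inside $G_{R,P}$, which \emph{already} fixes $P$; hence the deduction that $w=\tau^{-1}(x)x^{-1}$ fixes $P$ yields no new information. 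Your proposed bookkeeping is then circular: from $\tau(x)=\tau(w)^{-1}x$ one sees that $\tau(x)$ fixes $P$ if and only if $\tau(w)$ fixes $P$, i.e.\ if and only if $\tau(w)=1$ --- which is precisely what has to be shown. Nor can the configuration $w\in S_P\setminus\{1\}$, $\tau(w)\in S_Q\setminus\{1\}$, $\tau^2(w)\in S_R\setminus\{1\}$ with $w\,\tau(w)\,\tau^2(w)=1$ be excluded by counting fixed points on $P,Q,R$ and invoking that only the identity fixes three points: triples of nontrivial $2$-elements with pairwise distinct fixed points and product the identity do exist in $\Sz(q)$. Already in a subgroup $\Sz(2)\cong C_5\rtimes C_4$, the product of two elements of order $4$ fixing distinct points can be an involution fixing a third point. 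So the conclusion you assert for the bookkeeping does not follow from the facts you invoke.

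The repair is the one the paper uses: parametrise $A$ by the other torus, $\omega\in H:=G_{P,\tau^2(P)}$, which also acts regularly on $A$. Then $\tau^{-1}(\omega)$ lies in $G_{\tau^2(P),\tau(P)}$, whose two fixed points are $\tau(P)$ and $\tau^2(P)$, neither equal to $P$. The hypothetical equality $\rho^{\omega}=\rho^{\tau^{-1}(\omega)}$ forces $\tau^{-1}(\omega)\omega^{-1}$, and hence $\tau^{-1}(\omega)$, to fix $P$ as well; then $\tau^{-1}(\omega)$ fixes three points of $\OO$ and must be the identity, so $\omega=1$. With that single change of torus your argument closes, and no cocycle analysis is needed.
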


\begin{proof}
    All elements of even order of $G$ have exactly one fixed point. Since $\rho$ and $\tau(\rho)$ are always involutions with different fixed points, their product has either $0$ or $2$ fixed points, and must thus have odd order.

    It remains to show that, if $\rho \neq \rho'\in A$, then $\rho \tau(\rho)$ cannot be conjugated to $\rho'\tau(\rho')$.

    Let $H := G_{P,\tau^2(P)}$ be the stabilizer in $G$ of $p$ and $\tau^2(P)$. Hence, $H$ is a cyclic group of order $q-1$. Let $\zeta$ be a generator of $H$.
    For $\omega := \zeta^i$, $i = 1,2, \cdots, q-2$, we have that

    \begin{equation*}
        (\rho \tau(\rho))^\omega = \rho^\omega \tau(\rho)^\omega = \rho^\omega \omega \tau(\rho) \omega^{-1} = \rho^\omega \tau(\tau^{-1}(\omega) \rho \tau^{-1} (\omega^{-1}) ) = \rho^\omega \tau(\rho ^{\tau^{-1}(\omega)})
    \end{equation*}

    We claim that $\rho^\omega \neq \rho^{\tau^{-1}(\omega)}$.
    Indeed, suppose that $\rho^\omega = \rho^{\tau^{-1}(\omega)}$. Conjugating both sides by $\omega^{-1}$, we obtain that $\rho = \rho^{\tau^{-1}(\omega)\omega^{-1}}$. Set $\beta = \tau^{-1}(\omega)\omega^{-1} = \tau^2(\omega)\omega^{-1}$. 
    We have that $\rho^\beta= \rho$ thus $\rho^\beta$ fixes $P$ and $\beta$ fixes $P$ as well. This means that $\tau^{-1}(\omega)$ must fix $P$ since $\omega^{-1}$ and $\beta$ fix $P$. 
    But $\tau^{-1}(\omega)$ also fixes $\tau^{-1}(P)= \tau^2(P)$ and $\tau^{-1}(\tau^2(P))= \tau(P)$ so $\tau^{-1}(\omega)$ would have three fixed points, a contradiction.

    We conclude using Lemma~\ref{lem:conjClasses}. Indeed we showed that $\rho\tau(\rho)$ is conjugated to $\rho^\omega \tau(\rho ^{\tau^{-1}(\omega)})$ which in turn cannot be conjugated to $\rho^\omega \tau(\rho ^\omega)$ by Lemma~\ref{lem:conjClasses}, unless $\rho^\omega = \rho ^{\tau^{-1}(\omega)}$, which we ruled out. Since $H$ acts transitively on $A$, this concludes the lemma.
\end{proof}

\begin{theorem}
[Existence of base Chamber]\label{cor:existence}
    Let $m$ be any odd integer that arises as the order of some element of $G$. For any point $P$ of $\OO$ and for any triality $\tau$ of $G$ such that $\tau(P) \neq P$, there exists an involution $\rho_0$ fixing $P$ such that $\rho_0 \cdot \tau(\rho_0)$ has order $m$. Moreover, if $\langle \rho_0\tau(\rho_0) \rangle\neq \langle \rho_0\tau(\rho_0) \rangle^\tau$ then $G$ is generated by $\rho_0, \tau(\rho_0)$ and $\tau^2(\rho_0)$.
\end{theorem}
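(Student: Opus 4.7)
The plan is to handle existence and generation separately, with the generation argument consisting of a case analysis on maximal subgroups containing the subgroup generated by the three involutions, and the case of a Suzuki subfield being the main obstacle.

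For \emph{existence}, my approach is to identify the map $\rho\mapsto\rho\tau(\rho)$ from $A$ (the $q-1$ involutions fixing $P$) as a bijection onto the set of conjugacy classes of elements of odd order at least $3$ in $G$. First I would count these classes: by the structure of the maximal subgroups (Theorem~\ref{thm:maximalsubgroups}), every non-trivial odd-order element of $G$ lies up to conjugacy in exactly one of the cyclic subgroups $C_{q-1}$, $C_{\alpha_q}$, $C_{\beta_q}$, whose orders are pairwise coprime (the three factors together give $(q-1)(q^2+1)$, the odd part of $|G|$). The normalizers $D_{2(q-1)}$, $C_{\alpha_q}\!:\!C_4$, $C_{\beta_q}\!:\!C_4$ act on the respective cyclic groups and produce $(q-2)/2$, $(\alpha_q-1)/4$, $(\beta_q-1)/4$ conjugacy classes, summing to $q-1$ since $\alpha_q+\beta_q=2q+2$. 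By Lemma~\ref{lem:existence}, the image $\{\rho\tau(\rho):\rho\in A\}$ consists of $|A|=q-1$ pairwise non-conjugate elements of odd order, each non-trivial because $\rho$ and $\tau(\rho)$ have distinct fixed points. Thus the map is a bijection, and every odd $m$ occurring as an element order is realised.

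For \emph{generation}, set $K:=\langle\rho_0,\tau(\rho_0),\tau^2(\rho_0)\rangle$. Then $K$ is $\tau$-invariant and contains the two cyclic subgroups $\langle\rho_0\tau(\rho_0)\rangle$ and $\langle\rho_0\tau(\rho_0)\rangle^\tau=\langle\tau(\rho_0)\tau^2(\rho_0)\rangle$ of order $m$, which are distinct by hypothesis. Assume for contradiction that $K<G$; then $K$ sits in some maximal subgroup $M$ of Theorem~\ref{thm:maximalsubgroups}. If $M$ is one of $D_{2(q-1)}$, $C_{\alpha_q}\!:\!C_4$, $C_{\beta_q}\!:\!C_4$, then $M$ has a unique cyclic normal subgroup $N$ with $|M/N|\in\{2,4\}$ coprime to $m$, so every order-$m$ element of $M$ lies in $N$. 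As $N$ is cyclic it has a unique subgroup of order $m$, forcing $\langle\rho_0\tau(\rho_0)\rangle=\langle\rho_0\tau(\rho_0)\rangle^\tau$ against our hypothesis.

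The remaining case $M=\Sz(q_0)$ is where I expect the bulk of the work. After conjugation I may take $M=\Sz(q_0)$ and split according to whether $\tau$ preserves $\Sz(q_0)$. If not, then $K\subseteq \Sz(q_0)\cap\tau(\Sz(q_0))$, a proper subgroup of $\Sz(q_0)$; a direct analysis of this intersection should show it is contained in a maximal subgroup of $\Sz(q_0)$ of one of the non-Suzuki types handled above. If $\tau$ does preserve $\Sz(q_0)$, then $\tau|_{\Sz(q_0)}$ is an automorphism of order dividing $3$: the trivial case yields $\tau(\rho_0)=\rho_0$, contradicting $\tau(P)\ne P$, while an order-$3$ restriction places us in exactly the same setup for the smaller Suzuki group, permitting an induction. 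The chain of $\tau$-invariant Suzuki subfields is finite, so the descent must eventually terminate in one of the previous contradictions. The hard part will be controlling this descent: one needs to verify that under the hypothesis $\langle\rho_0\tau(\rho_0)\rangle\ne\langle\rho_0\tau(\rho_0)\rangle^\tau$ no configuration of the three involutions can remain trapped in a proper $\tau$-invariant $\Sz(q_0)$, which I would expect to follow from a careful analysis of the intersection of two distinct conjugate Suzuki subgroups combined with the bijection established in the existence step.
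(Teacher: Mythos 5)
Your existence argument and your treatment of the non-Suzuki maximal subgroups are correct and essentially the paper's own: the paper likewise deduces existence from Lemma~\ref{lem:existence} by matching the $q-1$ involutions fixing $P$ against the $q-1$ conjugacy classes of nontrivial odd-order elements (it simply cites Suzuki for the count that you carry out explicitly), and it likewise disposes of $M \cong D_{2(q-1)}$, $C_{\alpha_q}{:}C_4$ and $C_{\beta_q}{:}C_4$ by observing that such an $M$ has a unique cyclic subgroup of order $m$, which would force $\langle\rho_0\tau(\rho_0)\rangle = \langle\rho_0\tau(\rho_0)\rangle^\tau$ against the hypothesis.

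The genuine gap is the case $M \cong \Sz(q_0)$, which you yourself flag as unfinished. Two steps are missing. First, when $\tau(M) \neq M$ you assert that $K \leq M \cap \tau(M)$ ``should'' be contained in a non-Suzuki maximal subgroup of $M$; this is a nontrivial claim about the intersection of two distinct conjugates of $\Sz(q_0)$ that you neither prove nor reduce to anything available, and your closing sentence defers the termination of the whole descent to ``a careful analysis'' that is not supplied. The paper avoids both difficulties by organizing the descent around $H := \langle\rho_0,\tau(\rho_0),\tau^2(\rho_0)\rangle$ rather than around $\tau$-invariance of the ambient maximal subgroup: the non-Suzuki contradictions use only the uniqueness of the order-$m$ cyclic subgroup, so they apply at every level of a chain $H \leq \cdots \leq \Sz(q_1) \leq \Sz(q_0) \leq G$ with no need to know whether any intermediate Suzuki subgroup is $\tau$-stable, and $\tau$-stability is invoked only at the single step where $H$ \emph{equals} a Suzuki subgroup, where it is automatic because $H$ is generated by a $\tau$-orbit. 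At that step the paper's contradiction is also different from, and much shorter than, the one you sketch: an order-$3$ automorphism of $\Sz(q_0)$ forces $3 \mid 2e_0+1$, whence (by maximality) $q_0 = q^{1/3}$, and the only $\tau$-stable conjugate of $\Sz(q^{1/3})$ is the centralizer of $\tau$, which is fixed elementwise by $\tau$ --- impossible since $\tau$ moves $\rho_0$. To complete your write-up you would need either to prove your intersection claim or to restructure the descent along these lines.
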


\begin{proof}
By~\cite[§11, Theorem 5]{Suzuki1962}, the Suzuki group $G$ has exactly $q-1$ conjugacy classes of elements of odd order.
The first part of the statement is thus a direct consequence of Lemma~\ref{lem:existence} as  there are exactly $q-1$ involutions fixing a given point $P$.

 Let  $m$ be the order of $\rho_0\tau(\rho_0)$, let $H = \langle \rho_0, \tau(\rho_0), \tau^2(\rho_0) \rangle$, and suppose that $H$ is a proper subgroup of $G$. Then $H$ must be contained in some maximal subgroup of $G$. We use the classification of maximal subgroups provided by Theorem \ref{thm:maximalsubgroups}. Suppose first that $H$ is contained in a maximal subgroup $M$, isomorphic to either $D_{2(q-1)}$, $C_{\alpha_{q}}:C_4$. or $\C_{\beta_{q}}:C_4$. By the last sentence of Theorem \ref{thm:maximalsubgroups}, there is a unique cyclic subgroup $K$ of order $m$ in $M$. Hence, the three subgroups $\langle \rho_0\tau(\rho_0) \rangle, 
\langle \tau(\rho_0)\tau^2(\rho_0)\rangle$ and $\langle \tau^2(\rho_0)\rho_0
\rangle$ all contain $K$. But any two of these subgroups also have an involution in common, so they must then all be equal, a contradiction. 

Suppose instead that $M$ is isomorphic to a maximal Suzuki subgroup $\Sz(q_0)$. If $H =M$, since $H$ is always stabilized by $\tau$, it would imply that $M$ is also stabilized by $\tau$. This means that $\tau$ restricts to an automorphism of $\Sz(q_0)$. This implies that $q_0 = 2^{2e_0+1}$ with $2e_0+1$ divisible by $3$ and therefore, by maximality, $q_0 = q^{1/3}$. But the only Suzuki subgroup $\Sz(q^{1/3})$ stabilized by $\tau$ is precisely the one fixed element-wise by $\tau$. This is a contradiction since $\tau$ clearly moves $\rho_0$.

If instead $H$ is again a proper subgroup of $M$, we can repeat the same argument starting with $\Sz(q_0)$ this time. At some point, we arrive at a contradiction, hence showing that $G = \langle \rho_0,\tau(\rho_0),\tau^2(\rho_0) \rangle$.
\end{proof}

Using Theorem \ref{cor:existence}, we can find the base chamber of the chamber system we want to construct. We first investigate some geometric properties of such base chambers and show their algebraic implications.

Let $\rho_0$ be an involution with fixed point $P$ such that the order of $(\rho_0\tau(\rho_0))$ is equal to $m$ for some integer $m$. 
Let $\rho_1 := \tau(\rho_0)$ and $\rho_2 := \tau^2(\rho_0)$. Then $\rho_1$ has fixed point $Q = \tau(P)$ and $\rho_2$ has fixed point $R = \tau^2(P)$. For $i,j,k$ such that $I = \{i,j,k\},$ the subgroups generated by $\langle \rho_j, \rho_k \rangle$ are dihedral groups $H_i = H_i(\rho_0,\tau)$ of order $2m$. The existence of such a configuration is guaranteed by Corollary~\ref{cor:existence}.

Let $\OO_i = \OO(H_i)$ be the set of points of $\OO$ that are fixed by some involution in $H_i$. Thus $\OO_i$ contains exactly $m$ points for each $i \in I$. 

\begin{definition}\label{def:triangle}
    Let $\OO_i$ be set of points of $\OO$ for $i \in I$. We say that an ordered triple $( \OO_0, \OO_1, \OO_2 )$ forms a \textit{triangle} if $\OO_i \cap \OO_j \neq \emptyset$ for all $i \neq j = 0,1,2$. We say that the triangle $( \OO_0, \OO_1, \OO_2 )$ is \textit{non-degenerate} if the $\OO_i$'s are all distinct for $i = 0,1,2$. We say that the triangle $( \OO_0, \OO_1, \OO_2 )$ is \textit{proper} if $\OO_0 \cap \OO_1 \cap \OO_2 = \emptyset$.
    Finally, if $\rho_0$ is an involution of $G$ and $\tau$ is a triality as above, we say that $T_{\rho_0,\tau} := (\OO_0 = \OO(H_0(\rho_0,\tau)),\OO_1 = \OO(H_1(\rho_0,\tau)),\OO_2=\OO(H_2(\rho_0,\tau)))$ is the {\em triangle associated to} $(\rho_0,\tau)$.
\end{definition}

We now show that non-degeneracy of $T_{\rho_0,\tau}$ implies that $G$ is generated by $\rho_0, \tau(\rho_0)$ and $\tau^2(\rho_0)$.

\begin{lemma}\label{lem:degenerate}
    If $T_{\rho_0, \tau}$ is degenerate, then $H_0(\rho_0,\tau) = H_1(\rho_0,\tau) =H_2(\rho_0,\tau)$
\end{lemma}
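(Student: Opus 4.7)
The plan is to assume, by the evident symmetry of the three indices, that $\OO_0=\OO_1$ and to deduce $H_0=H_1$; once this is shown, the equality $H_2=H_0$ follows immediately because $\rho_0\in H_1=H_0$ and $\rho_1\in H_0$ together force $H_2=\langle\rho_0,\rho_1\rangle\leq H_0$, and both subgroups have order $2m$.

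A key preliminary is the fact that the fixed-point map gives a bijection between the $m$ involutions of each $H_i=D_{2m}$ and the set $\OO_i$, so in particular $|\OO_i|=m$. Indeed, for any two distinct involutions $\alpha,\beta\in H_i$, the product $\alpha\beta$ lies in the cyclic subgroup $C_m\leq H_i$ and is non-identity, hence of odd order; if $\alpha$ and $\beta$ shared a fixed point $X$, both would lie in the elementary abelian subgroup $Z(S_X)$ of the Sylow $2$-subgroup of $G_X$, making $\alpha\beta$ an involution, a contradiction. Using this, I would analyse $H_0\cap H_1\leq H_0=D_{2m}$: since it contains the involution $\rho_2$ and the only subgroups of $D_{2m}$ (for $m$ odd) that contain an involution are the dihedral subgroups $D_{2d}$ with $d\mid m$, we have $H_0\cap H_1=D_{2d}$ for some $d\mid m$. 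The lemma thus reduces to excluding $d<m$.

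To do so, set $L=\langle H_0,H_1\rangle$. Both $H_0$ and $H_1$ permute their involutions by conjugation and so stabilise $\OO_0=\OO_1$ setwise; hence so does $L$. Since only the identity of $G=\Sz(q)$ fixes three or more points of $\OO$, and $|\OO_0|=m\geq 3$, the action $L\hookrightarrow\mathrm{Sym}(\OO_0)\cong S_m$ is faithful, and since $3\nmid|G|$ the group $L$ contains no element of order $3$. The numerical input is $|H_0H_1|=(2m)^2/(2d)=2m^2/d$, so if $d<m$ then, letting $p$ be the smallest prime divisor of $m$ (which is $\geq 5$, as $m$ is odd and coprime to $3$), we get $|H_0H_1|\geq 2mp\geq 10m$. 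The main obstacle is thus to bound $|L|$ below this threshold. For $m=5$ this is immediate: the $\{2,5\}$-subgroups of $S_5$ containing $D_{10}$ are exactly $D_{10}$ and $F_{20}=C_5{:}C_4\cong\Sz(2)$, both of which contain a unique $D_{10}$; so $|L|\leq 20<50$ and we must have $d=m=5$, forcing $H_0=H_1$. For a general $m$ satisfying the paper's divisibility hypotheses, one combines the classification of transitive subgroups of $S_m$ with no element of order $3$ (for $m$ prime, Burnside's theorem gives $L\leq\mathrm{AGL}(1,m)$, so $|L|\leq m(m-1)<2mp$) with Theorem \ref{thm:maximalsubgroups}, which restricts which subgroups of $\Sz(q)$ can contain $D_{2m}$, to obtain the same bound $|L|<2mp$.
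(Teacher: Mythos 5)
Your argument diverges from the paper's at the key step. The paper does not analyse $H_0\cap H_1$ at all: it observes that in the degenerate case all three involutions $\rho_0,\rho_1,\rho_2$ stabilize the $m$-point set $\OO_0$ setwise, so $\langle\rho_0,\rho_1,\rho_2\rangle$ is a proper subgroup of $G$ (by transitivity of $G$ on $\OO$), and then invokes the second part of Theorem~\ref{cor:existence} to conclude that the three cyclic subgroups $\langle\rho_i\rho_j\rangle$, and hence the three $H_i$, coincide. Your route instead bounds $|\langle H_0,H_1\rangle|$ via its faithful action on $\OO_0$. For $m=5$ your argument is correct and complete (the reduction to $\OO_0=\OO_1$, the bijection between involutions of $H_i$ and $\OO_i$, the identification $H_0\cap H_1\cong D_{2d}$, and the count inside $S_5$ are all sound), and it has the virtue of not relying on Theorem~\ref{cor:existence}.

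However, the lemma is needed for general $m$ (Lemma~\ref{lem:order5} and Corollary~\ref{cor:hypermaps} apply it with $m=\alpha_q$ or $\beta_q$), and there your proof has a genuine gap: the bound $|L|<2mp$ is asserted, not proved. The parenthetical appeal to Burnside's theorem covers only prime $m$, and even there Burnside only gives the dichotomy ``$L\le\mathrm{AGL}(1,m)$ or $L$ is $2$-transitive''; ruling out the $2$-transitive case requires knowing that every $2$-transitive group of prime degree $\ge 5$ contains an element of order $3$, which is not elementary (note that $\Sz(q)$ itself is a $2$-transitive group with no elements of order $3$, so the claim is genuinely degree-dependent). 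More importantly, the relevant $m$ are usually composite --- e.g.\ for $q=512$ one needs $m=\alpha_q=545=5\cdot 109$ --- so the prime case does not suffice, and ``combine the classification of transitive subgroups of $S_m$ without $3$-elements with Theorem~\ref{thm:maximalsubgroups}'' is a program, not an argument. The cleanest repair is essentially the paper's: note that $L=\langle H_0,H_1\rangle\supseteq\langle\rho_0,\rho_1,\rho_2\rangle$ stabilizes $\OO_0$, hence is proper in $G$, and then run the maximal-subgroup analysis of Theorem~\ref{cor:existence} (each of $D_{2(q-1)}$, $C_{\alpha_q}{:}C_4$, $C_{\beta_q}{:}C_4$ has a unique cyclic subgroup of order $m$, forcing $H_0$ and $H_1$ to share their cyclic part and hence, since they also share the involution $\rho_2$, to be equal), rather than trying to bound $|L|$ purely as a permutation group of degree $m$.
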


\begin{proof}
    Suppose that $\OO_0 = \OO_1 = \OO_2 $ but that the three subgroups $H_0, H_1$ and $H_2$ are distinct. By the second part of Theorem \ref{cor:existence}, we can then deduce that $G = \langle \rho_0, \tau(\rho_0), \tau^2(\rho_0) \rangle$. But all three of $
    \rho_0, \tau(\rho_0)$ and $\tau^2(\rho_0)$ stabilize $\OO_0$. Therefore, $\langle \rho_0, \tau(\rho_0), \tau^2(\rho_0) \rangle$ is a subgroup of the stabilizer in $G$ of $\OO_0$, which is a proper subgroup since $G$ acts transitively on the whole ovoid $\OO$. This is a contradiction.  
\end{proof}
\begin{corollary}
    If $T_{\rho_0, \tau}$ is non-degenerate, then $G = \langle \rho_0, \tau(\rho_0), \tau^2(\rho_0) \rangle$.
\end{corollary}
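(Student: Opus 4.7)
The plan is to reduce the statement to the second part of Theorem~\ref{cor:existence}. That theorem says that $G = \langle \rho_0,\tau(\rho_0),\tau^2(\rho_0)\rangle$ as soon as $\langle \rho_0\tau(\rho_0)\rangle \neq \langle \rho_0\tau(\rho_0)\rangle^\tau$, so it suffices to prove that non-degeneracy of $T_{\rho_0,\tau}$ forces these two cyclic subgroups to be distinct. I would prove the contrapositive: if $\langle \rho_0\tau(\rho_0)\rangle = \langle \rho_0\tau(\rho_0)\rangle^\tau$, then $T_{\rho_0,\tau}$ is degenerate.

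Set $C := \langle \rho_0\tau(\rho_0)\rangle$, a cyclic subgroup of odd order $m$. Since $H_2 = \langle \rho_0,\tau(\rho_0)\rangle$ is dihedral of order $2m$ with $m$ odd, $C$ is the unique cyclic subgroup of index two in $H_2$; analogously, $\tau(C) = \langle \tau(\rho_0)\tau^2(\rho_0)\rangle$ is the rotation subgroup of $H_0$, and $\tau^2(C)$ is the rotation subgroup of $H_1$. Using the identity $\rho_0 = \tau(\rho_0)\cdot(\rho_0\tau(\rho_0))^{-1}$, one sees that $H_2 = \langle C,\tau(\rho_0)\rangle$, and the analogous identities show $H_0 = \langle \tau(C),\tau(\rho_0)\rangle$ and $H_1 = \langle \tau^2(C),\rho_0\rangle$. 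Assume now $\tau(C) = C$; then $\tau^2(C) = C$ as well, so
\[
H_0 = \langle \tau(C),\tau(\rho_0)\rangle = \langle C,\tau(\rho_0)\rangle = H_2,
\]
and an identical argument with $\rho_0$ in place of $\tau(\rho_0)$ yields $H_1 = \langle C,\rho_0\rangle = H_2$. Consequently $\OO_0 = \OO(H_0) = \OO(H_2) = \OO_2$ and $\OO_1 = \OO(H_1) = \OO_2$, so the triangle $T_{\rho_0,\tau}$ is degenerate, completing the contrapositive.

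The proof is essentially a bookkeeping step bridging Theorem~\ref{cor:existence} and the dihedral structure of the subgroups $H_i$, so no substantial obstacle is expected. The only subtlety to be careful about is the use of the oddness of $m$: it ensures that each $H_i$ has a \emph{unique} cyclic subgroup of order $m$, which is what allows one to conclude $H_0 = H_2$ from the single assumption $\tau(C) = C$ together with the shared involution $\tau(\rho_0)\in H_0\cap H_2$.
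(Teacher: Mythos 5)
Your proof is correct and follows essentially the same route as the paper: both reduce the statement to the second part of Theorem~\ref{cor:existence}. The only difference is that you explicitly verify the hypothesis $\langle \rho_0\tau(\rho_0)\rangle \neq \langle \rho_0\tau(\rho_0)\rangle^{\tau}$ (via the generation identities $H_i = \langle \tau^{i+1}(C),\cdot\rangle$), a step the paper leaves implicit by instead invoking Lemma~\ref{lem:degenerate} to get distinctness of the $H_i$.
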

\begin{proof}
    By Lemma \ref{lem:degenerate}, we know that the subgroups $H_0,H_1,H_2$ are distinct. We can then conclude by using the second part of Theorem \ref{cor:existence}.
\end{proof}
\begin{figure}
\begin{center} \label{TriangleOvoids}
    \begin{tikzpicture}
    \filldraw[black] (-2,0) circle (2pt)  node[anchor=north]{$P$};
    \filldraw[black] (2,0) circle (2pt)  node[anchor=north]{$\tau(P)$};
    \filldraw[black] (0,3.46410161514) circle (2pt)  node[anchor=north]{$\tau^2(P)$};
    \draw [rotate = 60](1,1.73205) ellipse (3cm and 1cm);
    \draw [rotate = -60](-1,1.73205) ellipse (3cm and 1cm);
    \draw (0,0) ellipse (3cm and 1cm);
    \filldraw[black] (0,0) circle (0pt)  node[anchor=north]{$O_2$};
    \filldraw[black] (-1,1.73205) circle (0pt)  node[anchor=south]{$O_1$};
    \filldraw[black] (1,1.73205) circle (0pt)  node[anchor=south]{$O_0$};
    \end{tikzpicture}
    \caption{The base chamber $T_{\rho_0,\tau}$ of the chamber system}
\end{center}
\end{figure}
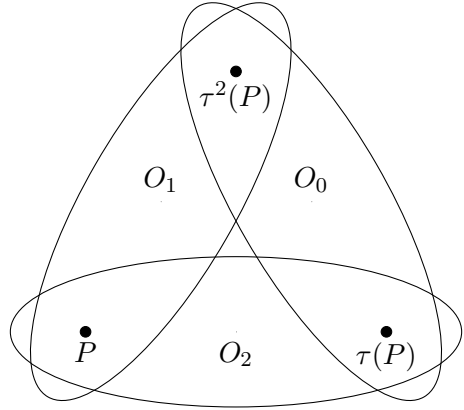

The next Lemma gives criterion for $T_{\rho_0,\tau}$ to be non-degenerate or proper.

\begin{lemma} \label{lem:order5}
    If $\rho_0$ in an involution and $\tau$ is a triality such that $\rho_0 \tau(\rho_0)$ is of order $m$ with $ m \equiv 2 (\bmod\; 3)$, then the triangle $T_{\rho_0,\tau} =(\OO_0,\OO_1,\OO_2)$ associated to $(\rho_0,\tau)$ is non-degenerate. Furthermore, if $m = 5$, the triangle $T_{\rho_0,\tau} =(\OO_0,\OO_1,\OO_2)$ is also proper.
\end{lemma}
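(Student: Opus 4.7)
The plan is to prove the two assertions separately; non-degeneracy is a clean number-theoretic calculation inside a hypothetical common dihedral subgroup, while properness requires a case analysis on the location of a would-be common fixed point.

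For non-degeneracy, suppose $T_{\rho_0,\tau}$ is degenerate. By Lemma~\ref{lem:degenerate}, $H_0=H_1=H_2=H$, a dihedral group of order $2m$ normalised by $\tau$; since $\tau$ cyclically permutes the three distinct involutions $\rho_0,\rho_1,\rho_2\in H$ (they must be distinct, else $\rho_0\rho_1=1$, contradicting $m>1$), it induces an order-three automorphism of $H$, acting on the cyclic subgroup $\langle r\rangle$ of order $m$ as $r\mapsto r^k$ with $k^3\equiv 1\pmod m$. Writing $\rho_0\rho_1=r^a$ with $\gcd(a,m)=1$, applying $\tau$ and $\tau^2$ yields $\rho_1\rho_2=r^{ka}$ and $\rho_2\rho_0=r^{k^2a}$; since $(\rho_0\rho_1)(\rho_1\rho_2)(\rho_2\rho_0)=1$, we get $1+k+k^2\equiv 0\pmod m$, and $k\not\equiv 1\pmod m$ (otherwise all $\rho_i$ coincide). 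But $m\equiv 2\pmod 3$ implies $m$ has a prime factor $p\equiv 2\pmod 3$, modulo which $x^2+x+1$ has no root (its discriminant $-3$ is not a square), a contradiction.

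For properness with $m=5$, I first note that $5\nmid q-1$ for $q=2^{2e+1}$ (the multiplicative order of $2$ modulo $5$ is $4$ and $2e+1$ is odd), so elements of order $5$ in $G$ fix no point of $\OO$. Consequently the five involutions of each $H_i\cong D_{10}$ fix five distinct points forming $\OO_i$, and each $X\in\OO_i$ determines a unique involution $\sigma_i\in H_i$ fixing it. Suppose for contradiction $X\in\OO_0\cap\OO_1\cap\OO_2$; the intersection is $\tau$-invariant since $\tau$ permutes the $\OO_i$ cyclically. Up to $\tau$-symmetry, the main case is $X=P$, which forces $P$ to lie in the $\langle\rho_1\rho_2\rangle$-orbit of $Q$, say $P=(\rho_1\rho_2)^k(Q)$ for some $k\in\{1,2,3,4\}$. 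For $k=1$, I set $Y:=\rho_1(P)=\rho_2(Q)$; applying $\tau$-symmetry to the defining equation also gives $Y=\rho_0(R)$, and computing $\tau(Y)$ via the two expressions yields $\tau(Y)=Y$ (from $Y=\rho_1(P)$) and $\tau(Y)=Q$ (from $Y=\rho_0(R)$), hence $Y=Q$ and then $P=Q$, absurd. For $k=4$, $P=\rho_2(Q)$, so $\rho_2$ swaps $P,Q$ and fixes $R$; by $\tau$-symmetry each $\rho_i$ is then a transposition on $\{P,Q,R\}$, so $\rho_0\rho_1$ acts as a $3$-cycle on this set, forcing an element of order divisible by $3$ in $G$, impossible since $3\nmid|G|$. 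The intermediate cases $k=2,3$ and the residual case $X\notin\{P,Q,R\}$ are handled by analogous but more elaborate computations.

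The main technical hurdle is expected to be the case analysis in properness, especially for $X\notin\{P,Q,R\}$, where three pairwise-commuting involutions $\sigma_0,\sigma_1,\sigma_2\in Z(S_X)$ with $\sigma_i\in H_i$ must be excluded; this likely requires the classification of intersections of $\Sz(2)$-subgroups in $G$ together with the fact that each $D_{10}$ lies in a unique such $\Sz(2)$. The non-degeneracy argument, by contrast, reduces to a short modular-arithmetic calculation.
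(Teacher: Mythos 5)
Your non-degeneracy argument is correct, and it is genuinely different from the paper's: the paper counts $\tau$-fixed points of $\OO_0$ (an action on $m\equiv 2\pmod 3$ points has at least two fixed points, whose involutions generate $H_0$, forcing $\tau$ to be trivial on $H_0$), whereas you push everything into the congruence $1+k+k^2\equiv 0\pmod m$ and rule it out via a prime factor $p\equiv 2\pmod 3$. Your version is a clean alternative and, if anything, more robust for composite $m$. (Minor quibble: you do not need the parenthetical ``otherwise all $\rho_i$ coincide'' to exclude $k\equiv 1$; that case gives $m\mid 3$, already incompatible with $m\equiv 2\pmod 3$.)

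The properness part, however, has a genuine gap. First, your $k=1$ case contains a computational error: from $Y=\rho_0(R)$ you get $\tau(Y)=\tau(\rho_0)\bigl(\tau(R)\bigr)=\rho_1(P)=Y$, not $Q$ (recall $\tau(R)=\tau^3(P)=P$). So both expressions yield $\tau(Y)=Y$, which is consistent ($\tau$ does have fixed points on $\OO$, namely the sub-ovoid of $\Sz(q^{1/3})$), and no contradiction follows. Second, the cases $k=2,3$ and the entire case $X\notin\{P,Q,R\}$ are only asserted to be ``analogous but more elaborate''; they are not routine, and the $X\notin\{P,Q,R\}$ case in particular is where the real content lies. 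Only your $k=4$ case (the $3$-cycle on $\{P,Q,R\}$ contradicting $3\nmid|G|$) is complete. The paper avoids all of this case analysis with one uniform argument: take $X$ in the triple intersection and a second point $P'\in\OO_0\cap\OO_1$, let $\rho\in H_0\cap H_1$ be the involution fixing $P'$ and $\rho_X^0\in H_0$, $\rho_X^1\in H_1$ the involutions fixing $X$; then $\rho\rho_X^0$ and $\rho\rho_X^1$ both have order $5$ but are non-conjugate by Lemma~\ref{lem:conjClasses}, contradicting the fact that $\Sz(q)$ has a single conjugacy class of elements of order $5$. I recommend replacing your case analysis with this argument (or completing all four values of $k$ plus the off-vertex case, which would be considerably longer).
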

\begin{proof}
    Since $\OO_i \cap \OO_j$ always contains the fixed point of $\rho_k$,
    the triangle associated to any pair $(\rho_0,\tau)$ is always a triangle in the sense of Definition \ref{def:triangle}.     Suppose now that $\OO_0 = \OO_1 = \OO_2$. Then the triality $\tau$ acts on $\OO_0$ by permuting its points. Then, since $|\OO_0| = m = 2 (\bmod\; 3)$, the triality $\tau$ must fix at least $2$ points $X$ and $Y$ of $\OO_0$. Since $\tau$ also sends $H_0$ to itself by Lemma \ref{lem:degenerate}, we conclude that $\tau$ must fix $\rho_X$ and $\rho_Y$, the two involutions of $H_0$ having respectively $X$ and $Y$ as fixed points. But $H_0$ is generated by $\rho_X$ and $\rho_Y$, so $\tau$ should act trivially on $H_0$, a contradiction. 
    This shows that the triangle is non-degenerate.

    Suppose now that $m=5$ and that $\OO_0 \cap \OO_1 \cap \OO_2$ is not empty and contains some point $X \in \OO$. Let $P$ be a point in $\OO_0 \cap \OO_1$ which is not $X$. Let $\rho_X^0$ be the involution fixing $X$ in $H_0$, $\rho_X^1$ be the involution fixing $X$ in $H_1$ and $\rho$ be the involution fixing $P$ in $H_0 \cap H_1$. Then both $\rho\rho_X^0$ and $\rho\rho_X^1$ have order $5$. By Lemma \ref{lem:conjClasses} $\rho\rho_X^0$ and $\rho\rho_X^1$ cannot be conjugated. But there is only one conjugacy class of elements of order $5$ in $G$, a contradiction.
    This shows that the triangle is proper, and concludes the proof.
\end{proof}

We are now ready to show that for some well chosen value of $m = O(\rho_0\tau(\rho_0))$, we can construct regular hypermaps of type $(m,m,m)$ by letting $G$ acts on $T_{\rho_0,\tau}$.

\begin{corollary}\label{cor:hypermaps}
    Any triangle $T_{\rho_0,\tau}$ with $\rho_0\tau(\rho_0)$ of order $m$ and $m \equiv 2 (\bmod\; 3)$ defines a hypermap of type $(m,m,m)$ with automorphism group $G$. In particular, for any $q = 2^{2e+1}$ with $2e+1$ divisble by 3, there exists an hypermap of type $(\alpha_{q},\alpha_{q},\alpha_{q})$ where $\alpha_{q} = 2^{2e+1} + 2^{e+1} +1$ or of type $(\beta_{q},\beta_{q},\beta_{q})$ where $\beta_{q} = 2^{2e+1} -2^{e+1} +1$, depending on the parity of $e$.
\end{corollary}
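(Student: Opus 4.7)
The plan is to realize a regular hypermap from the three involutions $\rho_0$, $\rho_1:=\tau(\rho_0)$, $\rho_2:=\tau^2(\rho_0)$ via the assignment $r_i\mapsto \rho_i$. For this to yield a regular hypermap of type $(m,m,m)$ with automorphism group $G$, three ingredients need to be verified: (i) each $\rho_i$ is an involution, (ii) each product $\rho_i\rho_j$ has order exactly $m$, and (iii) $G = \langle \rho_0, \rho_1, \rho_2\rangle$. I would check each in turn.

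Ingredient (i) is immediate. For (ii), because $\tau$ is an automorphism of order $3$, the products
\[
\rho_1\rho_2 = \tau(\rho_0\tau(\rho_0)), \qquad \rho_2\rho_0 = \tau^2(\rho_0)\tau^3(\rho_0) = \tau^2(\rho_0\tau(\rho_0))
\]
are both $\tau$-conjugate to $\rho_0\rho_1 = \rho_0\tau(\rho_0)$, and so share its order $m$. For (iii), the hypothesis $m\equiv 2 \pmod 3$ lets me invoke Lemma~\ref{lem:order5} to obtain non-degeneracy of $T_{\rho_0,\tau}$, after which the corollary to Lemma~\ref{lem:degenerate} delivers the required generation $G=\langle\rho_0,\rho_1,\rho_2\rangle$.

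Once (i)--(iii) are in hand, the left action of $\Delta$ on $G$ via $r_i\mapsto\rho_i$ is a transitive permutation representation, hence a hypermap on which $G$ acts simply transitively by right multiplication. The hypermap is therefore regular, has $\Aut(\mathcal{M})=G$, and has type $(m,m,m)$.

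For the final assertion, a short computation modulo $3$ using $3\mid 2e+1$ gives $\alpha_q\equiv 2^{e+1}$ and $\beta_q\equiv -2^{e+1} \pmod 3$, so exactly one of $\alpha_q,\beta_q$ is $\equiv 2\pmod 3$ according to the parity of $e$. Theorem~\ref{thm:maximalsubgroups} guarantees a cyclic subgroup of the corresponding order in $G$, and Theorem~\ref{cor:existence} then produces an involution $\rho_0$ with $\rho_0\tau(\rho_0)$ of that order; the first part of the proof completes the argument. I do not anticipate a real obstacle since all the heavy lifting has already been carried out in the preceding lemmas; the only delicate point is tracking the generation condition carefully to conclude that the automorphism group is \emph{exactly} $G$ and not a proper quotient.
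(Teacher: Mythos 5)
Your proposal is correct and follows essentially the same route as the paper: generation of $G$ by $\rho_0,\rho_1,\rho_2$ via Lemma~\ref{lem:order5} and the corollary to Lemma~\ref{lem:degenerate}, plus the mod-$3$ computation to select $\alpha_q$ or $\beta_q$. You simply make explicit the routine verifications (the $\tau$-conjugacy of the three products and the standard passage from a generating triple of involutions to a regular hypermap) that the paper leaves implicit.
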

\begin{proof}
    Lemma \ref{lem:order5} tells us that $G = \langle \rho_0, \tau(\rho_0), \tau^2(\rho_0) \rangle $ whenever the order of $\rho_0\tau(\rho_0)$ is equal to $2$ modulo $3$. Notice that $ 2^{2e+1} + 2^{e+1} +1 \equiv 2 \pm 2 +1 (\bmod\; 3)$ where the sign depends on whether $e$ is odd or even. So for every value of $e$, exactly one of $\alpha_{q}$ and $\beta_{q}$ is congruent to $2 \bmod 3$ while the other is congruent to $1 \bmod 3$. Choosing the right one then yields a regular hypermap of the wanted type.
\end{proof}

Let $\mathcal{M}$ be one of the hypermaps obtained by Corollary \ref{cor:hypermaps}. The triality $\tau$ then sends $\mathcal{M}$ to a map $\tau(\mathcal{M})$ which is isomorphic to $\mathcal{M}$, but where the roles of vertices, hyperedges and hyperfaces are permuted cyclically. Such operation of the finite order on hypermaps have been studied in \cite{jones2010hypermap}. An operation of order three is called a triality and an operation of order two is called a duality. Since $\Out(G)$ has no elements of order $2$, the map $\mathcal{M}$ cannot have dualities. The hypermaps of Corollary \ref{cor:hypermaps} thus admit trialities but no dualities.

We now proceed, after a few results, to define the chamber system $C = \{\gamma(T_{\rho_0,\tau}) \mid \gamma\in G \}$. From here on, we suppose that we have a pair $(\rho_0,\tau)$ such that the associated triangle $T_{\rho_0,\tau}$ is proper and non-degenerate. 

\begin{lemma}\label{lem:tripleUniqueness}
    Let $(P,Q,R)$ be a triple of points permuted by a triality $\tau$. Then, $G_{\{P,Q,R\}}$ is trivial.
\end{lemma}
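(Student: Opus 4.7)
The plan is to rule out, case by case, the non-trivial actions that an element $\gamma\in G_{\{P,Q,R\}}$ could induce on the three-point set $\{P,Q,R\}$, using two well-known properties of the Suzuki group $G=\Sz(q)$: first, that only the identity of $G$ fixes three points of $\OO$ (recalled in Section~\ref{sec:Suzuki}); second, that $|G|=q^2(q-1)(q^2+1)$ is coprime to $3$, which follows immediately from $q=2^{2e+1}$ by reducing each factor modulo $3$.

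First I would dispose of the easy cases. If $\gamma$ acts trivially on $\{P,Q,R\}$, then $\gamma$ has three fixed points on $\OO$ and hence $\gamma=e$. If $\gamma$ acts as a $3$-cycle on $\{P,Q,R\}$, then the order of the permutation induced by $\gamma$ equals $3$, so $3$ divides the order of $\gamma$ and thus $3\mid |G|$, contradicting the coprimality just noted.

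The substantive case is when $\gamma$ fixes one of the three points and swaps the other two; by the $\tau$-symmetry of the triple we may assume that $\gamma(P)=P$, $\gamma(Q)=R$ and $\gamma(R)=Q$. Here the triality $\tau$ comes into play. Using the convention $\tau(\gamma)=\tau\gamma\tau^{-1}$ fixed above the lemma and the fact that $\tau$ acts on $\OO$ by cycling $P\to Q\to R\to P$, a direct computation gives $\tau(\gamma)(P)=\tau(\gamma(R))=R$, $\tau(\gamma)(Q)=Q$ and $\tau(\gamma)(R)=P$, so $\tau(\gamma)$ fixes $Q$ and swaps $P$ and $R$. Composing, the product $\gamma\cdot\tau(\gamma)\in G$ cycles $P\mapsto Q\mapsto R\mapsto P$, placing us back in the $3$-cycle case already eliminated, and we conclude that no such $\gamma$ exists.

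The only delicate point is making the action of $\tau(\gamma)$ on the three points unambiguous and confirming that $\gamma\cdot\tau(\gamma)$ is truly a $3$-cycle; once the convention for $\tau(\gamma)$ is pinned down this is a one-line verification. Apart from that bookkeeping, the argument is a clean application of the two structural facts about $\Sz(q)$, and no deeper use of the ovoid geometry is required.
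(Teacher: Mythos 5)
Your proof is correct and follows exactly the paper's argument: dispose of the pointwise-fixing case via the three-fixed-points property, rule out a $3$-cycle via $3\nmid|G|$, and reduce the remaining transposition case to the $3$-cycle case by forming $\gamma\cdot\tau(\gamma)$. The only difference is that you spell out the computation of $\tau(\gamma)$'s action and the coprimality check, which the paper leaves to the reader.
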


\begin{proof}
    Suppose there is a $\gamma \in G$ preserving the set $\{P,Q,R\}$. If $\gamma$ fixes $\{P,Q,R\}$ pointwise, it must be the identity as the only element of $G$ that fixes three points of $\OO$ is the identity element. Also, $\gamma$ cannot permute $P,Q$ and $R$ cyclically, since $3$ does not divide the order of $G$. Suppose, without loss of generality, that $\gamma(P) = P$ and that $\gamma$ exchanges $Q$ and $R$. Then $\tau(\gamma)$ fixes $Q$ and exchanges $P$ and $R$. But then one easily checks that $\gamma \cdot \tau(\gamma)$ permutes cyclically $P,Q$ and $R$, a contradiction.
\end{proof}

For a triangle $(\OO_0,\OO_1,\OO_2)$, its image $\gamma(\OO_0,\OO_1,\OO_2)$ by the action of $\gamma \in G$ is the triangle $(\gamma(\OO_0),\gamma(\OO_1),\gamma(\OO_2))$.

\begin{lemma}\label{lem:triangles}
    The image $\gamma(T_{\rho_0,\tau})$ by $\gamma \in G$ of the triangle associated to $(\rho_0,\tau)$ is $T_{\rho_0^\gamma, \tau^\gamma}$, the triangle associated to $(\rho_0^\gamma, \tau^\gamma)$.
\end{lemma}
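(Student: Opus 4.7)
The plan is to prove the equality by unwinding the definitions on both sides and tracking how conjugation by $\gamma$ intertwines with the construction $(\rho_0, \tau) \mapsto T_{\rho_0,\tau}$. The argument should be largely mechanical, but careful bookkeeping of the notation $\tau(\rho) = \tau \rho \tau^{-1}$ versus the action of $\tau$ on points of $\OO$ is required.

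First, I would verify the ``group element'' side. Setting $\rho_0' = \rho_0^\gamma$ and $\tau' = \tau^\gamma = \gamma \tau \gamma^{-1}$, a direct computation gives $\tau'(\rho_0') = \gamma \tau \gamma^{-1} \cdot \gamma \rho_0 \gamma^{-1} \cdot \gamma \tau^{-1} \gamma^{-1} = \gamma \tau(\rho_0) \gamma^{-1} = \rho_1^\gamma$, and similarly $(\tau')^2(\rho_0') = \rho_2^\gamma$. Hence the dihedral groups built from $(\rho_0', \tau')$ are precisely $H_i(\rho_0',\tau') = \langle \rho_j^\gamma, \rho_k^\gamma \rangle = H_i(\rho_0,\tau)^\gamma$ for each $\{i,j,k\} = I$.

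Next, I would show that the ``ovoid side'' transforms correctly, i.e.\ $\OO(H_i^\gamma) = \gamma(\OO(H_i))$. By definition, $\OO(H_i)$ is the set of points fixed by some involution of $H_i$. If $\sigma \in H_i$ is an involution fixing some point $X \in \OO$, then $\sigma^\gamma \in H_i^\gamma$ is an involution satisfying $\sigma^\gamma(\gamma(X)) = \gamma \sigma \gamma^{-1}(\gamma(X)) = \gamma \sigma(X) = \gamma(X)$, so $\gamma(X) \in \OO(H_i^\gamma)$. This gives $\gamma(\OO(H_i)) \subseteq \OO(H_i^\gamma)$, and the reverse inclusion follows symmetrically by applying the same argument to $\gamma^{-1}$.

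Combining the two steps, the ordered triple $T_{\rho_0^\gamma,\tau^\gamma} = (\OO(H_0^\gamma), \OO(H_1^\gamma), \OO(H_2^\gamma))$ equals $(\gamma(\OO_0), \gamma(\OO_1), \gamma(\OO_2)) = \gamma(T_{\rho_0,\tau})$, as required. There is no real obstacle here; the only subtle point is being consistent about the conventions fixed at the beginning of Section~\ref{sec:main}, namely that $\tau$ acts on points of $\OO$ directly while acting on group elements by conjugation, so that $\tau^\gamma$ denotes the conjugated automorphism $\gamma \tau \gamma^{-1}$ and not a separately defined object.
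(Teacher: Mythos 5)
Your proof is correct and follows essentially the same route as the paper: both arguments reduce the claim to the identity $H_i(\rho_0^\gamma,\tau^\gamma) = H_i(\rho_0,\tau)^\gamma$ obtained by unwinding the conjugation conventions, and then observe that the fixed-point set of the conjugated dihedral group is the $\gamma$-image of the original fixed-point set. The only difference is that you spell out the fixed-point transfer $\OO(H_i^\gamma)=\gamma(\OO(H_i))$ explicitly, which the paper leaves implicit.
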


\begin{proof}
    In other words, we need to show that $\OO_i(\rho_0^\gamma, \tau^\gamma) = \gamma(\OO_i(\rho_0,\tau))$. By definition, $\OO_2(\rho_0^\gamma, \tau^\gamma)$ is the set of fixed points of the involutions of $\langle \rho_0^\gamma, \tau^\gamma \rho_0^\gamma (\tau^\gamma) ^{-1}\rangle = \langle \rho_0, \tau \rho_0 \tau^{-1}\rangle ^\gamma$. Since $\OO_2(\rho_0,\tau)$ is the set of fixed points of involutions in $\langle \rho_0, \tau \rho_0 \tau^{-1} \rangle$, this concludes the Lemma for $\OO_2$. The cases of $\OO_0$ and $\OO_1$ are identical.
\end{proof}

\begin{corollary} [Chamber-transitivity] \label{cor:FT}
    Let $C = \{\gamma(T_{\rho_0,\tau}) \mid \gamma \in G \} = \{(\gamma(\OO_0),\gamma(\OO_1),\gamma(\OO_2)) \mid \gamma \in G\}$. Then $|C| = |G|$. Moreover, if $UC := \{ \{\gamma(\OO_0),\gamma(\OO_1),\gamma(\OO_2)\}\mid \gamma \in G\}$ is the set of unordered triples, we also have that $|UC| = |G|$.
\end{corollary}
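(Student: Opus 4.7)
The plan is to prove that the natural action of $G$ on $C$ (by applying $\gamma$ componentwise to the three ovoids of the triangle, which by Lemma~\ref{lem:triangles} is the action $\gamma \cdot T_{\rho_0,\tau} = T_{\rho_0^\gamma,\tau^\gamma}$) is free. The orbit-stabilizer theorem then yields $|C|=|G|$, and the analogous argument for the action on unordered triples gives $|UC|=|G|$.

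First, I would establish the geometric fact that the pairwise intersections of the ovoids in the base triangle are singletons, namely $\OO_1\cap\OO_2=\{P\}$, $\OO_0\cap\OO_2=\{Q\}$, and $\OO_0\cap\OO_1=\{R\}$. Suppose for a contradiction that some point $P'\in\OO_1\cap\OO_2$ differs from $P$. Because each involution of $G$ has a unique fixed point on $\OO$, there are unique involutions $\sigma\in H_1$ and $\sigma'\in H_2$ with fixed point $P'$, and necessarily $\sigma\neq\rho_0$ and $\sigma'\neq\rho_0$. Since $m=5$ is prime, the products $\rho_0\sigma\in H_1$ and $\rho_0\sigma'\in H_2$ are rotations of order exactly $5$ in $G$. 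On one hand, Lemma~\ref{lem:conjClasses} applied with the set of involutions fixing $P$ (containing $\rho_0$) and the set of involutions fixing $P'$ (containing both $\sigma$ and $\sigma'$) forces $\rho_0\sigma$ and $\rho_0\sigma'$ to lie in distinct $G$-conjugacy classes. On the other hand, Theorem~\ref{thm:maximalsubgroups} together with a short normalizer computation for a cyclic subgroup of order $5$ shows that $\Sz(q)$ has exactly one conjugacy class of elements of order $5$, so the two elements must be conjugate in $G$ — a contradiction. The other two singleton identities follow by the completely symmetric argument starting from $\rho_1$ or $\rho_2$ in place of $\rho_0$.

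Next, suppose $\gamma\in G$ stabilizes the ordered triangle $T_{\rho_0,\tau}$, meaning $\gamma(\OO_i)=\OO_i$ for each $i\in\{0,1,2\}$. Then $\gamma$ preserves each pairwise intersection, so by the previous step $\gamma(P)=P$, $\gamma(Q)=Q$, and $\gamma(R)=R$. Since only the identity of $G$ fixes three distinct points of $\OO$, we conclude $\gamma=1$, and therefore $|C|=|G|$. For the unordered case, if $\gamma$ stabilizes $\{\OO_0,\OO_1,\OO_2\}$ as a set, it permutes these three ovoids and consequently permutes the corresponding singleton pairwise intersections, so it preserves the set $\{P,Q,R\}$; Lemma~\ref{lem:tripleUniqueness} then forces $\gamma=1$, giving $|UC|=|G|$.

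The main obstacle is the first step, namely establishing that the pairwise intersections are singletons. The argument rests critically on combining Lemma~\ref{lem:conjClasses} with the fact that $\Sz(q)$ has a single conjugacy class of elements of order $5$ — a structural property of the Suzuki groups that is essential here and is the underlying reason why the flag transitivity argument is specialized to $m=5$ throughout the paper.
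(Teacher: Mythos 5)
Your argument is correct and ends where the paper's does --- the setwise stabilizer of $\{P,Q,R\}$ is trivial by Lemma~\ref{lem:tripleUniqueness} --- but the route to that point is genuinely different. The paper works with the subgroups: an element $\gamma$ stabilizing $\{\OO_0,\OO_1,\OO_2\}$ conjugates $\{H_0,H_1,H_2\}$ to itself, and since $H_i\cap H_j=\langle\rho_k\rangle$ (which, given that the $H_i$ are pairwise distinct by non-degeneracy and Lemma~\ref{lem:degenerate}, is just the subgroup structure of a dihedral group of order $2p$), $\gamma$ permutes $\{\rho_0,\rho_1,\rho_2\}$ and hence their fixed points. You instead prove the geometric counterpart $\OO_i\cap\OO_j=\{\tau^k(P)\}$ and let $\gamma$ act on these singleton intersections. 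That statement is true, and your proof of it is exactly the mechanism the paper uses to establish properness in Lemma~\ref{lem:order5} (Lemma~\ref{lem:conjClasses} played against the uniqueness of the conjugacy class of order-$5$ elements); so your version is more self-contained geometrically but re-runs the heavier conjugacy-class machinery at a point where the paper's subgroup argument needs only elementary dihedral-group facts. One repair is needed: Lemma~\ref{lem:conjClasses} only separates $\omega\zeta$ from $\omega\zeta'$ when $\zeta\neq\zeta'$, so you must dispose of the case $\sigma=\sigma'$ separately --- but then $H_1\cap H_2$ contains the two distinct involutions $\rho_0$ and $\sigma$, hence equals $H_1=\langle\rho_0,\sigma\rangle=H_2$, contradicting non-degeneracy. (Similarly, the uniqueness of the involution of $H_1$ fixing $P'$ is really the assertion $|\OO_1|=m$ rather than a consequence of involutions having unique fixed points; it holds because two involutions sharing a fixed point lie in a common Sylow $2$-subgroup and so cannot have a product of order $5$.) With these small patches your proof is complete.
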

\begin{proof}
    We only prove the second statement as it implies the first. Let $c = T_{\rho_0,\tau}$ and $c' \in C$. Then $c' = \gamma(c)$ for some $\gamma\in G$. 
    Suppose that $c'= c$. Then $\gamma$ must send the triple $\{\OO_0,\OO_1,\OO_2\}$ to itself. This also means that $\gamma$ must conjugate the triple $\{H_0,H_1,H_2\}$ to itself. Since $H_i \cap H_j = \langle \rho_k \rangle$ for $\{i,j,k\} = I$, we conclude that $\gamma$ must conjugate the triple $\{\rho_0,\rho_1,\rho_2\}$ to itself. Therefore, if $P,Q,R$ are the fixed points of $\rho_i$, we can conclude that $P,Q,R$ must be fixed setwise by $\gamma$, and thus that $\gamma = 1_G$, by Lemma \ref{lem:tripleUniqueness}.
\end{proof}

Let $C = \{\gamma(T_{\rho_0,\tau}) \mid \gamma \in G \}$ and let $d \in C$. We say that $d$ is a \textit{chamber} of $\C$. Then $d = T_{\rho^\gamma, \tau^\gamma}$ for some $\gamma \in G$ by Lemma \ref{lem:triangles}. Let $H_i(d)$ be the unique dihedral subgroup of order $2m$ inside of the stabilizer of $\OO_i(\rho^\gamma, \tau^\gamma)$. Then $\rho_i(d)$ is defined to be the unique involution in $H_j(d) \cap H_k(d)$, whenever $\{i,j,k\} = I$.

\begin{definition}[The chamber system]
    Let $C = \{\gamma(T_{\rho_0,\tau}) \mid \gamma\in G \}$ as before and define equivalence relations $\sim_i, i \in I$ on $C$ as follows:
    for $c_1,c_2 \in C$, 
    $c_1 \sim_i c_2$ if and only if $c_1 = \rho_i(c_1)(c_2)$ or $c_1 = c_2$.
\end{definition}
Note that if $c_1 = \rho_i(c_1)(c_2)$, then $\rho_i(c_1) = \rho_i(c_2)$ so that $\sim_i$ is indeed symmetric.

\begin{theorem}\label{thm:FT}
    $G$ acts chamber transitively by automorphisms on the chamber system $(C,\sim_i)$, $i \in I$.
\end{theorem}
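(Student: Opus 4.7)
The plan is to verify two things in sequence: first, that the $G$-action on $C$ is transitive, and second, that this action preserves each equivalence relation $\sim_i$.

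Transitivity is essentially built into the definition of $C = \{\gamma(T_{\rho_0,\tau}) \mid \gamma \in G\}$, so there is nothing to prove beyond noting that Corollary~\ref{cor:FT} guarantees that distinct group elements give distinct chambers, so the action is in fact sharply transitive (simply transitive). This already matches the cardinality $|C| = |G|$.

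The real content is showing that $G$ acts by automorphisms, i.e.\ that for every $\gamma \in G$ and every pair $c_1, c_2 \in C$ with $c_1 \sim_i c_2$, we have $\gamma(c_1) \sim_i \gamma(c_2)$. The key observation is the equivariance formula
\[
\rho_i(\gamma(c)) = \gamma\, \rho_i(c)\, \gamma^{-1},
\]
which follows directly from Lemma~\ref{lem:triangles}: since $\gamma(T_{\rho_0^\delta,\tau^\delta}) = T_{\rho_0^{\gamma\delta},\tau^{\gamma\delta}}$, the distinguished dihedral subgroups $H_i(\gamma(c))$ are the $\gamma$-conjugates of $H_i(c)$, and hence so are the distinguished involutions. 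Assuming $c_1 \sim_i c_2$ with $c_1 \ne c_2$, so $c_1 = \rho_i(c_1)(c_2)$, we then compute
\[
\rho_i(\gamma(c_1))\bigl(\gamma(c_2)\bigr) = \bigl(\gamma\, \rho_i(c_1)\, \gamma^{-1}\bigr)\bigl(\gamma(c_2)\bigr) = \gamma\bigl(\rho_i(c_1)(c_2)\bigr) = \gamma(c_1),
\]
which says exactly that $\gamma(c_1) \sim_i \gamma(c_2)$.

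The main (mild) obstacle is checking that $\rho_i$ is natural in the stated way; this is really just unwinding the definition of $\rho_i(c)$ through Lemma~\ref{lem:triangles}, so the argument is short. Once the equivariance of $\rho_i$ is in place, preservation of each $\sim_i$ is a one-line conjugation computation, and combined with the transitivity already noted this gives the theorem.
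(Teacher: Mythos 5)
Your proposal is correct and follows essentially the same route as the paper: chamber transitivity from Corollary~\ref{cor:FT}, and preservation of $\sim_i$ via the equivariance $\rho_i(\gamma(c)) = \rho_i(c)^\gamma$ drawn from Lemma~\ref{lem:triangles}. Your write-up just makes the conjugation computation slightly more explicit than the paper's.
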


\begin{proof}
    Chamber-transitivity is clear by Corollary \ref{cor:FT}. To check that $G$ acts by automorphisms, it suffices to check that the following diagram commutes:
\begin{center}
 \begin{tikzcd}
C \arrow[r, "\rho_i"] \arrow[d, "\gamma"'] & C \arrow[d, "\gamma"] \\
C \arrow[r, "\rho_i"]                 & C               
\end{tikzcd}
\end{center}

To check that, let $c \in C$. Then we need to compare $(\gamma \circ \rho_i(c))(c)$ to $(\rho_i(\gamma(c)) \circ \gamma)(c)$. But since $\rho_i(\gamma(c)) = \rho_i(c)^\gamma$, the two expressions are indeed equal.
\end{proof}

We have thus constructed so far a chamber system $(C,\sim_i)$, $i \in I$ on which $G$ acts chamber transitively by automorphisms. It remains to show that  $(C,\sim_i)$, $i \in I$ is residually connected. For that we first need a technical result.

\begin{lemma}\label{lem:normalizer}
    Let $K$ be a cyclic subgroup of $G =\Sz(q)$ of prime odd order $p$. Let 
    $N_G(K)$ be the normalizer in $G$ of $K$ and let $H$ be a dihedral subgroup of $G$ of order $2p$. Then $H$ contains $K$ if and only if $H \leq N_G(K)$.  
\end{lemma}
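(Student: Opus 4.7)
The plan is to prove each direction separately, with the forward direction being routine and the reverse direction requiring a short argument using the structure of Sylow subgroups of $\Sz(q)$.

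For the forward direction, I would simply note that if $K \leq H$ then, since $K$ has index $2$ in $H$, it is normal in $H$, whence $H \leq N_G(K)$. This uses nothing but basic group theory, so it is a one-line argument.

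For the converse, I would start by singling out the unique cyclic subgroup $K'$ of order $p$ in $H$ (which is characteristic in any dihedral group of order $2p$ for $p$ an odd prime). The hypothesis $H \leq N_G(K)$ then gives $K' \leq N_G(K)$, so $K'$ normalizes $K$. The plan is to show $K = K'$ by contradiction: if $K \neq K'$, then $K \cap K' = 1$ by primality, and $K \cdot K'$ is an actual subgroup of order $p^2$ (since $K'$ normalizes $K$). I would then contradict the existence of such a subgroup by showing that every $p$-subgroup of $\Sz(q)$ (for $p$ odd) is cyclic, since a cyclic group of order $p^2$ contains exactly one subgroup of order $p$.

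The key ingredient is therefore that odd Sylow subgroups of $\Sz(q)$ are cyclic, which I would extract directly from Theorem~\ref{thm:maximalsubgroups}. Using $|\Sz(q)| = q^2(q-1)\alpha_q\beta_q$ and the fact (stated there) that $C_{q-1}$, $C_{\alpha_q}$, $C_{\beta_q}$ intersect trivially, the three odd factors $q-1$, $\alpha_q$, $\beta_q$ are pairwise coprime; since $q$ is a power of $2$, any odd prime $p$ dividing $|\Sz(q)|$ divides exactly one of them, and the corresponding cyclic maximal torus contains a full Sylow $p$-subgroup. Hence every Sylow $p$-subgroup is cyclic, and so is every $p$-subgroup. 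Applied to $K\cdot K'$, this gives the desired contradiction and forces $K' = K \leq H$.

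I do not expect any serious obstacle. The only subtlety is ensuring that $K \cdot K'$ really is a subgroup (which uses that $K'$ normalizes $K$, given by $K' \leq N_G(K)$) and that it is genuinely of order $p^2$ (which uses primality to force trivial intersection); both are routine once the framework is set up.
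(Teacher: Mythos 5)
Your proof is correct, and the converse direction takes a genuinely different route from the paper. The paper argues directly with the structure of $N_G(K)$: it identifies $N_G(K)$ as either $D_{2(q-1)}$ or $C_x : C_4$ (with $x = q\pm\sqrt{2q}+1$) according to which torus order $p$ divides, and observes that each of these groups has a unique cyclic subgroup of odd order $p$; so $H \leq N_G(K)$ with $K \not\leq H$ would force two such subgroups, a contradiction. You instead avoid identifying $N_G(K)$ altogether: you use only that $K'$ (the rotation subgroup of $H$) normalizes $K$, form the subgroup $KK'$ of order $p^2$ when $K \neq K'$, and rule it out because odd Sylow subgroups of $\Sz(q)$ are cyclic. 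Your version is more self-contained group theory once the cyclicity of odd Sylows is in hand, and it generalizes to any group with that property; the paper's version leans on the explicit normalizer structure, which it needs anyway elsewhere. One small repair to your write-up: the pairwise coprimality of $q-1$, $\alpha_q$, $\beta_q$ does not formally follow from the statement in Theorem~\ref{thm:maximalsubgroups} that the three particular subgroups $C_{q-1}$, $C_{\alpha_q}$, $C_{\beta_q}$ intersect trivially (two subgroups of non-coprime orders can still meet trivially). It is, however, elementary: $\gcd(q-1,q^2+1) = \gcd(q-1,2) = 1$ since $q-1$ is odd, and $\gcd(\alpha_q,\beta_q)$ divides $\alpha_q - \beta_q = 2^{e+2}$ while both are odd. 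With that substitution your argument is complete.
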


\begin{proof}

    Let $H$ be a dihedral subgroup containing $K$. Then $H$ normalizes $K$ as $K$ is of odd order. So $H \leq N_G(K)$.

    Suppose that $H\leq N_G(K)$ and $K\not\leq H$.
    Then $N_G(K)$ contains two cyclic subgroups of order $p$, namely $K$ and the subgroup of order $p$ of $H$.
Also $N_G(K) = D_{2(q-1)}$ if $p| q-1$ and $N_G(K) = C_{x}:C_4$ otherwise (where $x = q+\sqrt{2q}+1$ or $q-\sqrt{2q}+1$). In any case, $N_G(K)$ has a unique cyclic subgroup of odd order $p$, a contradiction.
\end{proof}

We are now ready to show that $(C,\sim_i)$, $i \in I$ is residually connected, under the assumption that $\gamma(T_{\rho_0,\tau})$ is proper and non-degenerate.

\begin{theorem}~\label{th3.11}
Let $C = \{\gamma(T_{\rho_0,\tau}) \mid \gamma\in G \}$ for a proper and non-degenerate triangle $\gamma(T_{\rho_0,\tau})$. Suppose moreover that $\rho_0\tau(\rho_0)$ is an element of prime order $p$.
The chamber system $(C,\sim_i), i \in I$ is residually connected.
\end{theorem}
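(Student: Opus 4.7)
The plan is to apply the residual connectedness criterion of Theorem~\ref{thm:chamberRC}. By Theorem~\ref{thm:FT} and Corollary~\ref{cor:FT}, $G$ acts chamber transitively on $(C,\sim_i)$ with trivial base-chamber stabilizer, so Proposition~\ref{prop:CosetChamberSystem} identifies $(C,\sim_i)$ with the coset chamber system $\CC(G,\{1\},(G^{(i)})_{i\in I})$ where $G^{(i)}=\langle\rho_i\rangle$ and $G^{(\{j,k\})}=H_i$ for $\{i,j,k\}=I$. Connectedness follows from Lemma~\ref{lem:chanmberSystemConnectedness}: since $T_{\rho_0,\tau}$ is non-degenerate, combining Lemma~\ref{lem:degenerate} with the second part of Theorem~\ref{cor:existence} yields $G=\langle\rho_0,\rho_1,\rho_2\rangle$.

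What remains is to verify condition~(2) of Theorem~\ref{thm:chamberRC}, namely $G^{(L)}\cap G^{(J)}G^{(K)}=G^{(L\cap J)}G^{(L\cap K)}$ for all $J,K,L\subseteq I$. All configurations in which one of $J,K,L$ is empty, or $L=I$, or $L\supseteq J\cup K$, or $J=K$, or $|J|=|K|=1$, reduce to routine set identities using only $H_a\cap H_b=\langle\rho_c\rangle$ and $\rho_a\notin H_a$ (which follows from properness of the triangle via Lemma~\ref{lem:order5}). By the $J\leftrightarrow K$ symmetry the single genuinely delicate case is $|L|=|J|=|K|=2$ with the three sets pairwise distinct; after renaming we may take $L=\{i,\ell\}$, $J=\{i,j\}$, $K=\{\ell,j\}$ for $\{i,j,\ell\}=I$, and the equality to prove becomes
\[
H_j\cap H_\ell H_i \;=\; \{1,\rho_i,\rho_\ell,\rho_i\rho_\ell\}.
\]
The inclusion $\supseteq$ is immediate from $\rho_i\in H_\ell\cap H_j$, $\rho_\ell\in H_i\cap H_j$, and $\rho_i\rho_\ell=\rho_i\cdot\rho_\ell\in H_\ell H_i$.

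The key input for the reverse inclusion is Lemma~\ref{lem:normalizer} combined with the classification in Theorem~\ref{thm:maximalsubgroups}. Let $K_a$ denote the unique cyclic subgroup of order $p$ in $H_a$; the three subgroups $K_0,K_1,K_2$ are pairwise distinct (since $H_a\cap H_b$ has order $2$), and each $N_G(K_a)$ --- one of $D_{2(q-1)}$, $C_{\alpha_q}\!:\!C_4$, or $C_{\beta_q}\!:\!C_4$ --- contains a unique cyclic subgroup of order $p$. A short argument then gives the structural identity
\[
H_a\cap N_G(K_b)\;=\;\langle\rho_c\rangle\qquad\text{whenever }\{a,b,c\}=I:
\]
any second involution of $H_a$ lying in $N_G(K_b)$ would, together with $\rho_c$, generate the dihedral $H_a$ and force $H_a\leq N_G(K_b)$, hence $K_b\leq H_a$ by Lemma~\ref{lem:normalizer}, a contradiction; and any nontrivial element of order $p$ there would force $K_a=K_b$.

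I then analyse $g=xy\in H_j$ with $x\in H_\ell$, $y\in H_i$ by cases on the orders of $x,y\in\{1,2,p\}$ and that of $g\in\{1,2,p\}$. The cases $x=1$ or $y=1$ are handled directly via $H_j\cap H_i=\langle\rho_\ell\rangle$ and $H_j\cap H_\ell=\langle\rho_i\rangle$. For the remaining cases the strategy is to show that both $x$ and $y$ lie in $N_G(\langle g\rangle)$: when $\langle x,y\rangle$ is itself dihedral of order $2p$, Lemma~\ref{lem:normalizer} places it inside $N_G(K_j)$; in the other configurations the identity $xy=g\in N_G(K_j)$ together with one of $x,y$ already being in $N_G(K_j)$ forces the second factor there as well. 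The structural identity above then collapses $(x,y)$ to one of $(1,1),(\rho_i,1),(1,\rho_\ell),(\rho_i,\rho_\ell)$, placing $g$ in the claimed four-element set. I expect the main obstacle to be the mixed-order sub-case in which $\langle x,y\rangle$ is neither cyclic nor dihedral: there Lemma~\ref{lem:normalizer} does not apply to $\langle x,y\rangle$ as a whole, and one must exploit the explicit structure of $N_G(K_j)$ from Theorem~\ref{thm:maximalsubgroups} in a more direct fashion to transfer the normalization of $\langle g\rangle$ from one factor to the other.
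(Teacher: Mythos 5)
Your reduction to condition (2) of Theorem~\ref{thm:chamberRC} via Proposition~\ref{prop:CosetChamberSystem}, the treatment of the easy cases, and the identity $H_a\cap N_G(K_b)=\langle\rho_c\rangle$ (a clean repackaging of how the paper uses Lemma~\ref{lem:normalizer} and the uniqueness of the odd cyclic subgroup in the relevant maximal subgroups) all match the paper's proof, and your involution-times-involution subcase goes through essentially as written there. The genuine gap is in the subcase where $x\in H_\ell$ and $y\in H_i$ \emph{both have order $p$} --- not the mixed-order subcase you flag as the expected obstacle. Your central mechanism, ``show that both $x$ and $y$ lie in $N_G(\langle g\rangle)$ and collapse $(x,y)$ to one of $(1,1),(\rho_i,1),(1,\rho_\ell),(\rho_i,\rho_\ell)$,'' is false there: take $x=\rho_i\rho_j\in H_\ell$ and $y=\rho_j\rho_\ell\in H_i$, so that $g=xy=\rho_i\rho_\ell\in H_j$ does lie in the four-element set, yet $x\notin N_G(K_j)$ by your own structural identity (since $H_\ell\cap N_G(K_j)=\langle\rho_i\rangle$ contains no element of order $p$). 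Applied to this factorization your argument would derive a contradiction and wrongly exclude $g$, and it gives no handle at all on the general products $(\rho_i\rho_j)^k(\rho_j\rho_\ell)^m$, where also neither factor lies in $N_G(K_j)$, so the premise ``one of $x,y$ already being in $N_G(K_j)$'' never holds.

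The missing ingredient is the telescoping identity the paper uses: $(\rho_i\rho_j)^k(\rho_j\rho_\ell)^m=\bigl[(\rho_i\rho_j)^{k-1}\rho_i\bigr]\cdot\bigl[\rho_\ell(\rho_j\rho_\ell)^{m-1}\bigr]$, which rewrites the product as (involution of $H_\ell$)$\cdot$(involution of $H_i$); for $(k,m)\neq(1,1)$ at least one of these involutions differs from $\rho_i$, resp.\ $\rho_\ell$, so the already-settled involution case applies, and $(k,m)=(1,1)$ is isolated as the unique surviving product $\rho_i\rho_\ell$. The mixed-order case then reduces to the two previous ones exactly as the paper does. Two smaller points: (i) the subcase of two involutions $x,y$ sharing a fixed point (where $\langle x,y\rangle$ is a $2$-group, so neither Lemma~\ref{lem:normalizer} nor your identity applies) needs \emph{properness} of the triangle, not just non-degeneracy, to rule out $xy$ being a nontrivial involution of $H_j$; (ii) your explicit verification of connectedness via Lemma~\ref{lem:chanmberSystemConnectedness} is a welcome addition, since it is a hypothesis of Theorem~\ref{thm:chamberRC} that the paper leaves implicit.
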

\begin{proof}
 Since $G$ acts chamber transitively by automorphisms on  $(C,\sim_i)$, by Proposition \ref{prop:CosetChamberSystem}, the coset chamber system $(G,G^i)$ is equivalent to  $(C,\sim_i)$, where $G^i = \langle \rho_i \rangle$ where $\rho_0$ is the involution used to define the triangle $T_{\rho_0,\tau}$ and $\rho_1 = \tau(\rho_0)$, $\rho_2 = \tau^2(\rho_0)$.  Therefore, we can use Theorem \ref{thm:chamberRC}. We will check that for all $J,K,L \subseteq I$, we have 
\begin{equation} \label{equation:RC}
    G^{(L)} \cap G^{(J)}G^{(K)} = G^{(L\cap J)}G^{(L\cap K)}.
\end{equation}

Note that $G^{(L)} \cap G^{(K)} = G^{(L \cap K)}$ for all $L,K \subset I$ and that $G^{(K)}G^{(L)} = G^{(L)}$ whenever $K \subset L$.
Most of the cases hold in a quite straightforward manner. For example, if $J = I$, equation~(\ref{equation:RC}) becomes 
\begin{equation*} 
    G^{(L)} \cap G^{(K)} = G^{(L)}G^{(L\cap K)} = G^{(L\cap K)}.
\end{equation*}
The case of $J = I$ or $K = I$ is similar.

Suppose now that $J = \emptyset$. Then equation~(\ref{equation:RC}) becomes 
\begin{equation*} 
     G^{(L)} \cap G^{(K)} = G^{(L\cap K)}.
\end{equation*}
The same happens if $K = \emptyset$ or $L = \emptyset$.

If $J = \{ 0 \}, K = \{1\}$ and $L = \{ 2\}$ then equation~(\ref{equation:RC}) becomes 
\begin{equation*} 
     \langle \rho_2 \rangle \cap \langle \rho_0 \rangle\langle \rho_1 \rangle = \{ \Id \}
\end{equation*}
which holds since $\rho_2$ is not contained in $H_2$ since $(\rho_0,\tau)$ is non-degenerated.

The harder case arises when $|J| =|K| =|L| = 2$. So suppose that $J = \{0,1\}, K = \{1,2\}$ and $L = \{0,2\}$. Equation~(\ref{equation:RC}) then becomes
    \begin{equation} \label{eq:FT}
    \langle \rho_0,\rho_2 \rangle \cap \langle \rho_0,\rho_1\rangle\langle \rho_1,\rho_2\rangle = \langle \rho_0\rangle\langle \rho_2\rangle = \{e,\rho_0,\rho_2,\rho_0\rho_2\}
\end{equation}
which is the same as 
\begin{equation*} 
    H_1 \cap H_2 H_0 = \{e,\rho_0,\rho_2,\rho_0\rho_2\} \subset H_1.
\end{equation*}

Note that $\{e,\rho_0,\rho_2,\rho_0\rho_2\} \subseteq H_1 \cap H_2H_0$ is evidently true, so it suffices to show the opposite inclusion.
To show this, let $\gamma_2 \in H_2$ and $\gamma_0 \in H_0$. 

Suppose first that $\gamma_2$ and $\gamma_0$ are both involutions. If they have the same fixed point $x$, then $\gamma_2\gamma_0$ is also an involution with fixed point $x$ which can then not be an element of $H_1$ since $(\rho_0,\tau)$ is non-degenerated. So suppose that $\gamma_2$ and $\gamma_0$ have different fixed points. If $\gamma_2 = \rho_0$ and $\gamma_0 = \rho_2$, their product is $\rho_0\rho_2$ so we are done. So we can suppose that either $\gamma_2 \neq \rho_0$ or $\gamma_0 \neq \rho_2$. Suppose that $\gamma_2 \neq \rho_0$. The product $\gamma_2\gamma_0$ cannot be an involution since it has $0$ or $2$ fixed points. Hence, either $\gamma_2\gamma_0$ is not in $H_1$ or it is of order $p$. If it is of order $p$, then the subgroup generated by $\gamma_2$ and $\gamma_0$ is a dihedral  subgroup $H$ of order $2p$ such that $H \cap H_1 = \langle \rho_0\rho_2 \rangle$. Therefore, by Lemma \ref{lem:normalizer}, $H$ must be contained in $N = N_G(\langle \rho_0\rho_2 \rangle)$. By the same Lemma, we know that $H_2$ cannot be in $N$. But that means that $\gamma_2 \notin N$ since $H_2 = \langle \rho_0,\gamma_2 \rangle$ and $\rho_0$ clearly is in $N$. This is a contradiction, which shows that $\gamma_2\gamma_0 \notin H_1$ as long as $\gamma_2 \neq \rho_0$, or $\gamma_0 \neq \rho_2$.

Suppose now that both $\gamma_2 \in H_2$ and $\gamma_0 \in H_0$ have order $p$. Then $\gamma_2 = (\rho_0\rho_1)^k$ for some $k = 1, \ldots, p-1$ and $\gamma_0 = (\rho_1\rho_2)^m$ for $m=1, \ldots, p-1$. Therefore,
    \begin{equation*}
        \gamma_2\gamma_0 = (\rho_0\rho_1)^k(\rho_1\rho_2)^m = (\rho_0\rho_1)^{k-1}(\rho_0\rho_2)(\rho_1\rho_2)^{m-1}
    \end{equation*}
    If $k=m=1$ then $\gamma_2\gamma_0 = \rho_0\rho_2 \in H_1$. Else, note that $(\rho_0\rho_1)^{k-1}\rho_0 = \rho_0^h$ with $h \in H_2$ if $k$ is odd and $(\rho_0\rho_1)^{k-1}\rho_0 = \rho_1^h$ with $h$ in $H_2$ if $k$ is even. The same thing holds for $\rho_2(\rho_1\rho_2)^{m-1}$. This means that if at least one of $k$ and $m$ is not $1$, $\gamma_2\gamma_0$ becomes equal to the product of two involutions, one in $H_2$ and one in $H_0$. We have already shown that such a product cannot be in $H_1$.
    We note that, from the above argument, it can be deduced that if $h_i$ is an element of order $m$ in $H_i$ and $h_j$ is an element of order $m$ in $H_j$, then if $h_ih_j$ is in $H_k$, it must be of order $p$.

Finally suppose without loss of generality that $\gamma_2$ has order $p$ and $\gamma_0$ has order $2$. Suppose that $\gamma_2\gamma_0 \in H_1$. If $\gamma_0 = \rho_2$ then $\gamma_0\in H_1$ and $\gamma_2 = \gamma_2(\gamma_0)^2\in H_1$ as well, a contradiction with Lemma~\ref{lem:normalizer}. Suppose thus that $\gamma_0 \neq \rho_0$. If the order of $\gamma_2\gamma_0$ is $2$, then $\langle \gamma_2, \gamma_2\gamma_0\rangle$ is a dihedral group of order $2p$ and we can use the same method as the first case. If the order of $\gamma_2\gamma_0$ is $p$, then $\gamma_2$ is of order $p$ in $H_2$, $\gamma_2\gamma_0$ is of order $p$ is $H_1$ so, by the comment at the end of the previous case, $\gamma_0 = \gamma_2^{-1} (\gamma_2\gamma_0)$ should be of order $p$, a contradiction.
\end{proof}

Finally, let $C = \{\gamma(T_{\rho_0,\tau}) \mid \gamma\in G \}$ be a chamber system obtained by Theorem \ref{th3.11}, and let $\Gamma = (G,(G_i)_{i \in I}$ be the associated coset geometry. We want to show that $\Gamma$ admits trialities but no dualities. The triality $\tau$ naturally induces a triality of $\Gamma$ since it permutes the three generators $\rho_0,\rho_1$ and $\rho_3$. To prove that $\Gamma$ admits no dualities, we show a results that associates to any correlation of $\Gamma$ an automorphism of $G$.

Let $\Gamma$ be a thin, simply flag transitive incidence geometry, let $G = \Aut(\Gamma)$ and let $c$ be a chamber of $\Gamma$. Let $\{c_0,\cdots, c_n\}$ be the set of chambers of $\Gamma$ incident to $c$. For each $c_i$, there is a unique element $\gamma_i \in G$ such that $c_i = \gamma_i(c)$. Let $S = \{\gamma_0, \cdots, \gamma_n \}$. We then have that each $\gamma_i$ has order $2$ and that $G = \langle \gamma_0, \cdots, \gamma_n \rangle$. For any correlation $\alpha$ of $\Gamma$ we can definite a map $\phi_\alpha \colon S \to S$ as follows.
For each $i$, there is exactly one chamber $c_i$ which is $i$-adjacent to $c$. The image $\alpha(c_i)$ is a chamber which is $j-$adjacent to $\alpha(c)$ for some $j \in I$. We then define $\phi_\alpha(\gamma_i) := \gamma_j$ and note $j = \alpha(i)$. In other words, $\phi_\alpha(\gamma_i) = \gamma_{\alpha(i)}$ where we let $\alpha$ naturally act on the type set $I$.

\begin{proposition}\label{prop:CorToAut}
Let $\Gamma$ be a thin, simply flag transitive incidence geometry and let $\alpha$ be a correlation of $\Gamma$. Then the map $\phi_\alpha \colon S \to S$ naturally extends to an  automorphism $\Phi_\alpha$ of $G$. 
\end{proposition}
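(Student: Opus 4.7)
The plan is to produce $\Phi_\alpha$ as the composition of two automorphisms of $G$: the automorphism of $G$ induced by conjugation by $\alpha$, and an inner automorphism of $G$ chosen to correct for the fact that $\alpha$ need not fix the base chamber $c$. This side-steps having to exhibit $G$ by generators and relations.

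The first key step is to use that $\Aut(\Gamma) = G$ is a normal subgroup of $\Cor(\Gamma)$, so that conjugation by $\alpha$ yields a well-defined group automorphism $\Psi_\alpha \colon G \to G$, $\gamma \mapsto \alpha\gamma\alpha^{-1}$. By simple flag transitivity, there is a unique $g_0 \in G$ with $g_0(c) = \alpha(c)$. I then set $\Phi_\alpha(\gamma) := g_0^{-1}\Psi_\alpha(\gamma)g_0$, which is automatically an automorphism of $G$ as a composition of two such.

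It remains to verify that $\Phi_\alpha$ restricts to $\phi_\alpha$ on the generating set $S$; this will force $\Phi_\alpha$ to be the unique extension since $\langle S \rangle = G$. Unwinding the definitions, one computes $\Phi_\alpha(\gamma_i)(c) = g_0^{-1}(\alpha(\gamma_i(c))) = g_0^{-1}(\alpha(c_i))$. Since $\alpha$ is a correlation sending the $i$-adjacency between $c$ and $c_i$ to the $\alpha(i)$-adjacency between $\alpha(c) = g_0(c)$ and $\alpha(c_i)$, and since $g_0^{-1} \in G$ preserves the type labeling of adjacencies, the chamber $g_0^{-1}(\alpha(c_i))$ is $\alpha(i)$-adjacent to $c$. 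Thinness then pins it down as $c_{\alpha(i)}$, and simple flag transitivity yields $\Phi_\alpha(\gamma_i) = \gamma_{\alpha(i)} = \phi_\alpha(\gamma_i)$.

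The only (mild) obstacle I anticipate is the last identification: one must rule out $g_0^{-1}(\alpha(c_i)) = c$ so that thinness really singles out $c_{\alpha(i)}$. But this equality would give $\alpha(c_i) = g_0(c) = \alpha(c)$, hence $c_i = c$, contradicting the definition of $c_i$. Everything else reduces to routine bookkeeping with the actions of $\Cor(\Gamma)$ on $G$ by conjugation and on the chamber set by evaluation.
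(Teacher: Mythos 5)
Your proof is correct, but it takes a genuinely different route from the paper's. The paper argues via the identification of the chamber graph with the Cayley graph of $G$ with respect to $S$: it shows that any word in $S$ representing the identity corresponds to a loop based at $c$, that $\alpha$ carries this loop to a loop based at $\alpha(c)$ whose edge labels are the $\phi_\alpha$-images of the original ones, and hence that $\phi_\alpha$ sends relators to relators and extends to a homomorphism, with surjectivity and injectivity checked separately by lifting paths. You instead exploit the normality of $\Aut(\Gamma)$ in $\Cor(\Gamma)$ (which the paper itself records, as the kernel of the action on $I$) to get the conjugation automorphism $\Psi_\alpha$ for free, and then correct by the inner automorphism attached to the unique $g_0$ with $g_0(c)=\alpha(c)$; equivalently, $\Phi_\alpha$ is conjugation by the unique correlation $g_0^{-1}\alpha$ in the coset $G\alpha$ fixing the base chamber. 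Your verification that this agrees with $\phi_\alpha$ on $S$ is sound, including the point that $g_0^{-1}(\alpha(c_i))\neq c$ so that thinness identifies it as $c_{\alpha(i)}$. What your approach buys: it avoids the somewhat sketchy word-to-loop bookkeeping, it makes bijectivity automatic, and it exhibits $\alpha\mapsto\Phi_\alpha$ as an honest conjugation, which makes the later use (a duality would force an order-two automorphism of $G$, impossible inner or outer) completely transparent. What the paper's argument buys is independence from the normality observation and a picture that works directly at the level of the chamber graph. Both arguments rely on the unproved-but-asserted fact from the setup that $S$ generates $G$, i.e.\ that the chamber graph is connected.
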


\begin{proof}
To show that $\Phi_\alpha$ is a homomorphism, it suffices to show that any word $w$ in the alphabet $S$ representing the identity in $G$ is sent by $\alpha$ to a word $\alpha(w)$ which also represents the identity in $G$. Let $w$ be such a word. Then it uniquely defines a loop $l$ with base point $c$ in the chamber graph $CG(\Gamma)$. This loop can be recorded by a sequence $i_1,i_2,\cdots, i_n$, $i_j \in I$, of adjacencies. The image $\alpha(l)$ of this loop is once again a loop, now with base point $\alpha(c)$, which is recorded by the sequence $\alpha(i_1), \alpha(i_2), \cdots, \alpha(i_n)$. This loop $\alpha(l)$ also uniquely defines a word which by definition is $\alpha(w)$. Thus $\alpha(w)$ represents the identity and $\Phi_\alpha$ is a homomorphism.

The homomorphism $\Phi_\alpha$ is surjective since any element $\gamma \in G$ can be represented by a path $p$ in $CG(\Gamma)$. Any path $p'$ such that $\alpha(p') = p$ represents an elements of $\gamma'\in G$ such that $\alpha(\gamma') = \gamma$.

The homomorphism $\Phi_\alpha$ is injective since $\alpha$ is injective on the vertices of $CG(\Gamma)$. Therefore, if $p$ is a path in $CG(\Gamma)$ which is not a loop, the image $\alpha(p)$ cannot be a loop so that any word not representing the identity in $G$ cannot be sent to the identity by $\alpha$.

\end{proof}

\begin{theorem}\label{thm:main}
    Let $G = \Sz(q)$ with $q = 2^{2e+1}$ where $2e+1$ is a multiple of $3$. There exists a flag transitive, residually connected, thin incidence geometry $\Gamma$ over $I = \{0,1,2\}$ with diagram 
    \begin{center}
    \begin{tikzpicture}[scale = 0.5]
    
   \filldraw[black] (-2,0) circle (2pt)  node[anchor=north]{};
   \filldraw[black] (2,0) circle (2pt)  node[anchor=north]{};
    \draw (-2,0) -- (2,0) node [midway, below = 3pt, fill=white]{$ \; \; \; \; \; 5 \; \; \; \; \; $};
       \filldraw[black] (-2,0) circle (2pt)  node[anchor=north]{};
   \filldraw[black] (0,3.46410161514) circle (2pt)  node[anchor=north]{};
    \draw (-2,0) -- (0,3.46410161514) node [midway, left = 4pt, fill=white]{$  5  $};
    \draw (2,0) -- (0,3.46410161514) node [midway, right = 4pt, fill=white]{$ 5 \ $};
    \end{tikzpicture}
    \end{center}
    such that $\Aut(\Gamma)\cong \Sz(q)$ has index 3 in $Cor(\Gamma)\cong \Sz(q):C_3$. Moreover, $\Gamma$ admits trialities but no dualities.

\end{theorem}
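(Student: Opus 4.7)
The plan is to specialize the chamber-system machinery developed in this section to the prime $m=5$. Since $q = 2^{2e+1}$, the order of $2$ modulo $5$ is $4$ and a direct check gives $q^2 \equiv 4 \pmod{5}$, so $5 \mid q^2+1 = \alpha_q \beta_q$; by Theorem~\ref{thm:maximalsubgroups} the group $G = \Sz(q)$ therefore contains elements of order $5$. Applying Theorem~\ref{cor:existence} with $m = 5$ produces an involution $\rho_0$ with $\rho_0\tau(\rho_0)$ of order $5$. First I would invoke Lemma~\ref{lem:order5} to conclude that the triangle $T_{\rho_0,\tau}$ is both proper and non-degenerate, then Theorem~\ref{thm:FT} to obtain a chamber-transitive action of $G$ on $(C,\sim_i)_{i \in I}$ by automorphisms, and finally Theorem~\ref{th3.11}, whose prime-order hypothesis is satisfied because $5$ is prime, to obtain residual connectedness.

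Next I would pass to the associated incidence geometry $\Gamma = \Gamma(G,(G_i)_{i \in I})$ with $G_i = \langle \rho_i\rangle$ via Theorem~\ref{thm:Correspondence}; it is residually connected and flag transitive by construction. Thinness follows at once from $|G_i| = 2$, and each rank-two residue is governed by the dihedral group $H_k = \langle \rho_i, \rho_j\rangle \cong D_{10}$, which gives an ordinary pentagon, so the diagram is the triangle with all three edges labelled $5$. To identify $\Aut(\Gamma)$, Corollary~\ref{cor:FT} shows that $G$ already acts simply on the chambers; and in a thin, residually connected, flag transitive geometry, any type-preserving automorphism fixing a chamber must fix the unique other chamber in each $i$-panel meeting it, and hence by connectedness every chamber. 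So $\Aut(\Gamma) = G \cong \Sz(q)$.

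For the correlation group, the triality $\tau$ conjugates $H_i$ to $H_{i+1 \bmod 3}$ and thereby permutes the three types cyclically, providing an order-$3$ correlation of $\Gamma$; so $\Cor(\Gamma)/\Aut(\Gamma)$ contains a $C_3$, and it only remains to exclude dualities. This is the step I expect to be the main obstacle, and the strategy is to exploit Proposition~\ref{prop:CorToAut}, now available because $\Gamma$ is thin and simply flag transitive. A hypothetical duality $\alpha$ would yield an involution $\Phi_\alpha \in \Aut(G)$ permuting the generating set $\{\rho_0,\rho_1,\rho_2\}$ by a transposition. Since $\Aut(\Sz(q))/\Inn(\Sz(q)) \cong C_{2e+1}$ has odd order, $\Phi_\alpha$ must be inner, hence equal to conjugation by some $g \in G$. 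Then $g$ permutes the fixed-point triple $\{P, \tau(P), \tau^2(P)\}$ setwise, so Lemma~\ref{lem:tripleUniqueness} forces $g = 1$, making $\Phi_\alpha$ trivial on a generating set and hence $\Phi_\alpha = \mathrm{id}$, which contradicts that $\alpha$ permutes types non-trivially. Therefore $\Cor(\Gamma)/\Aut(\Gamma) = C_3$ and, using that $\tau$ realises this quotient, $\Cor(\Gamma) \cong \Sz(q) \rtimes C_3$.
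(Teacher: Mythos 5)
Your proposal is correct and follows essentially the same route as the paper's proof: Lemma~\ref{lem:order5} for properness and non-degeneracy at $m=5$, Theorems~\ref{thm:FT} and~\ref{th3.11} for the chamber system, Theorem~\ref{thm:Correspondence} to pass to the geometry, and Proposition~\ref{prop:CorToAut} combined with Lemma~\ref{lem:tripleUniqueness} and the odd order of $\Out(\Sz(q))$ to exclude dualities. The only difference is that you fill in some details the paper leaves implicit (the verification that $5 \mid q^2+1$, the pentagon residues giving the diagram, and the identification $\Aut(\Gamma)\cong G$ via simple transitivity and thinness), all of which are sound.
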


\begin{proof}
By Lemma \ref{lem:order5}, we know that the triangle $T_{\rho_0,\tau}$ associated to any pair $(\rho_0,\tau)$ with $\rho_0\tau(\rho_0)$ of order $5$ is always proper and non-degenerate. Theorems~\ref{thm:FT} and ~\ref{th3.11} then show that we can construct a chamber transitive and residually connected chamber system $C$ with automorphism group $G$.
Theorem \ref{thm:Correspondence}  implies the existence of a coset geometry $\Gamma$ with automorphism group $G$ and parabolic subgroups $H_i = \langle \rho_j,\rho_k \rangle,$ for $\{i,j,k\} = \{0,1,2\}$. The triality $\tau$ then acts on $\Gamma$ as a correlation of order three. Proposition \ref{prop:CorToAut} guarantees that $\Gamma$ cannot have dualities. Indeed, if a duality were to exist, the map $\Phi_\alpha$ fixing $\rho_i$ and exchanging $\rho_j$ and $\rho_k$ should be an automorphism of $G$. This automorphism cannot be inner, by Lemma \ref{lem:tripleUniqueness}. It also cannot be outer since $G$ has no outer automorphism of order $2$.
\end{proof}
\section{Concluding remarks}\label{sec:conclusion}

We conclude this article with a few remarks. In this article, we decided to first construct a chamber system with the desired properties and then to use Theorem \ref{thm:Correspondence} to obtain the desired incidence geometries. There are two main reasons for that decision. Indeed, the chamber system approach not only simplifies some technical difficulties, as explained in more details below, but it also allows the construction to remain more concrete and geometric.

Suppose we were to try to construct directly and geometrically the incidence geometries of Theorem \ref{thm:main}. For that, we need to find a subset $S$ of points of $\OO$ such that the stabilizer of $S$ under the action of $\Sz(q)$ is a dihedral group of order $10$. This means we cannot simply set $S = \OO_5$, a sub-ovoid of $5$ points, as we did in the chamber system approach. Indeed the stabilizer in $\Sz(q)$ of $S$ would then be a subgroup isomorphic to $\Sz(2)$. The group $\Sz(2)$ nonetheless contains a dihedral group of order $10$ as an index $2$ subgroup, so it is most likely possible to fix this problem, but it requires additional effort.

On the other hand, if we accept to loose the geometric interpretation, the incidence geometries of Theorem \ref{thm:main} can be constructed directly as coset incidence geometries in the following way. Let $T_{\rho_0,\tau}$ is a proper and non-degenerate triangle and let $H_0,H_1$ and $H_2$ be the $3$ dihedral subgroups associated to $T_{\rho_0,\tau}$. We can then construct the coset geometry $\Gamma(G,(H_0,H_1,H_2))$. Equation (\ref{eq:FT}) is satisfied, meaning that this coset geometry is flag transitive.
Indeed, by a result of Tits~\cite[Section 1.4]{Tits1974}, this equation implies that any triple of cosets that are pairwise incident have a common element, and therefore that $G$ acts transitively on the chambers of $\Gamma$. Residual connectedness is straightforward.

\bibliographystyle{abbrv} 
\bibliography{SuzukiTrialities}

\begin{thebibliography}{10}

\bibitem{buekenhout2013diagram}
F.~Buekenhout and A.~M. Cohen.
\newblock {\em Diagram geometry: related to classical groups and buildings},
  volume~57.
\newblock Springer Science \& Business Media, 2013.

\bibitem{cori}
R.~Cori.
\newblock {\em Un code pour les graphes planaires et ses applications}.
\newblock Ast\'{e}risque, No. 27. Soci\'{e}t\'{e} Math\'{e}matique de France,
  Paris, 1975.
\newblock With an English abstract.

\bibitem{downs2016mobius}
M.~Downs and G.~A. Jones.
\newblock M{\"o}bius inversion in {S}uzuki groups and enumeration of regular
  objects.
\newblock In {\em Symmetries in Graphs, Maps, and Polytopes: 5th SIGMAP
  Workshop, West Malvern, UK, July 2014}, pages 97--127. Springer, 2016.

\bibitem{hypertopes}
M.~E. Fernandes, D.~Leemans, and A.~I. Weiss.
\newblock Highly symmetric hypertopes.
\newblock {\em Aequationes Math.}, 90(5):1045--1067, 2016.

\bibitem{JonesSingerman}
G.~Jones and D.~Singerman.
\newblock Maps, hypermaps and triangle groups.
\newblock In {\em The {G}rothendieck theory of dessins d'enfants ({L}uminy,
  1993)}, volume 200 of {\em London Math. Soc. Lecture Note Ser.}, pages
  115--145. Cambridge Univ. Press, Cambridge, 1994.

\bibitem{jones2010hypermap}
G.~A. Jones and D.~Pinto.
\newblock Hypermap operations of finite order.
\newblock {\em Discrete mathematics}, 310(12):1820--1827, 2010.

\bibitem{dessins}
G.~A. Jones and J.~Wolfart.
\newblock {\em Dessins d'enfants on {R}iemann surfaces}.
\newblock Springer Monographs in Mathematics. Springer, Cham, 2016.

\bibitem{Leemans1998}
D.~Leemans.
\newblock Thin geometries for the {S}uzuki simple group {${\rm Sz}(8)$}.
\newblock {\em Bull. Belg. Math. Soc. Simon Stevin}, 5(2-3):373--387, 1998.

\bibitem{LeemansStokes2019}
D.~Leemans and K.~Stokes.
\newblock Coset geometries with trialities and their reduced incidence graphs.
\newblock {\em Acta Math. Univ. Comenian. (N.S.)}, 88(3):911--916, 2019.

\bibitem{leemans2022incidence}
D.~Leemans and K.~Stokes.
\newblock Incidence geometries with trialities coming from maps with {Wilson}
  trialities.
\newblock {\em Innov. Incidence Geom.}, 20(2--3):325--340, 2023.

\bibitem{luneburg2012translation}
H.~L{\"u}neburg.
\newblock {\em Translation planes}.
\newblock Springer-Verlag, 1980.

\bibitem{Suzuki1962}
M.~Suzuki.
\newblock On a class of doubly transitive groups. {II}.
\newblock {\em Ann. of Math. (2)}, 79:514--589, 1964.

\bibitem{Tits1957}
J.~Tits.
\newblock Sur les analogues alg\'{e}briques des groupes semi-simples complexes.
\newblock In {\em Colloque d'alg\`ebre sup\'{e}rieure, tenu \`a {B}ruxelles du
  19 au 22 d\'{e}cembre 1956}, Centre Belge de Recherches Math\'{e}matiques,
  pages 261--289. \'{E}tablissements Ceuterick, Louvain, 1957.

\bibitem{Tits1962}
J.~Tits.
\newblock Ovoïdes à translations.
\newblock {\em Rend. Mat. Appl.}, 5(21):37–59, 1962.

\bibitem{Tits1974}
J.~Tits.
\newblock {\em Buildings of spherical type and finite {BN}-pairs}.
\newblock Lecture Notes in Mathematics, Vol. 386. Springer-Verlag, Berlin-New
  York, 1974.

\bibitem{tits1982}
J.~Tits.
\newblock A local approach to buildings.
\newblock In {\em The geometric vein}, pages 519--547. Springer, New
  York-Berlin, 1981.

\end{thebibliography}
\end{document}